\newtheorem{theorem}{Theorem}[section]
\newtheorem{corollary}[theorem]{Corollary}
\newtheorem{lemma}[theorem]{Lemma}
\newtheorem{definition}[theorem]{Definition}
\newtheorem{remark}[theorem]{Remark}
\title{On extremal (almost) edge-girth-regular graphs}
\author{}
\date{}
\begin{document}

\author{Gabriela Araujo-Pardo\footnote{Research supported in part by PAPIIT-UNAM-M{\' e}xico IN101821 and Proyecto de Intercambio Académico de la Coordinación de la Investigación Científica de la UNAM "Bipartite biregular cages, block designs and generalized quadrangles".
} \\
garaujo@im.unam.mx
\\
\\
Gy\"orgy Kiss\footnote{This research was supported in part by the Hungarian National Research, Development and Innovation Office  OTKA grant no. K 124950.} \\
gyorgy.kiss@ttk.elte.hu
\\
\\
Istv\'an Porups\'anszki\footnote{This research was supported in part by the Hungarian National Research, Development and Innovation Office  OTKA grant no. SNN 132625.}  \\
rupsansz@gmail.com}
\maketitle
\maketitle
\begin{abstract}
A $k$-regular graph of girth $g$ is called edge-girth-regular graph, shortly egr-graph, if each of its edges is contained in exactly $\lambda$ distinct $g-$cycles. An  egr-graph is called extremal for the triple $(k, g, \lambda)$ if has the smallest possible order. We prove that some graphs arising from incidence graphs of finite planes are extremal egr-graphs. 
We also prove new  lower bounds on the order of egr-graphs.
\end{abstract}

\noindent
\textbf{MSC:} 05C035, 51E20\\
\textbf{Keywords:} edge-girth-regular graph, cage problem, finite biaffine planes 

\section{Introduction}\label{one}

A frequently occurring type of problem in extremal graph theory is the following:
we fix some graph parameters or some graph properties and want to deduce the extremal number of another parameter (in many cases, the number of vertices or edges). 
In this paper, we deal with similar questions, which are also motivated by the 
classical cage problem.
A simple, finite, connected graph $G$ is \emph{$k$-regular} if each vertex has exactly 
$k$ neighbors. It is of  \emph{girth $g$} if its smallest cycles have $g$ vertices. A 
\emph{$g-$cycle} or  \emph{girth cycle} is a cycle of length $g$. The number 
of vertices of $G$ is called the order of $G$. A \emph{$(k,g)-$graph} is a
$k$-regular graph of girth $g$, if it has minimal order, then it is called a 
\emph{$(k,g)-$cage}. For more information about cages we refer to the dynamic 
survey by Exoo and Jajcay \cite{ExooJaj08}.

For $k=2$ the $(k,g)$-graphs are the $g-$cycles, so we assume that $k>2$ throughout this paper. By counting the vertices whose distance from a given vertex (when $g$ is odd) or edge (when $g$ is even) is at most 
$\lfloor (g-1)/2\rfloor $ results in the following bound.
    \begin{theorem}[Moore bound]
    Let $G$ be a $(k,g)$-graph with $k>2$. Then the order of $G$ is at least $n_0(k,g)$, where
    $$n_0(k,g)= 
\begin{cases}
\frac{k(k-1)^{\frac{g-1}{2}}-2}{k-2}, \quad\text{if g is odd;}\\
\frac{2(k-1)^\frac{g}{2}-2}{k-2}, \quad\text{~~if g is even.}
\end{cases}$$
    \end{theorem}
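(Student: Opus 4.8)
The plan is to prove the bound by a breadth-first counting argument, exhibiting an explicit tree-like portion of $G$ whose size is already $n_0(k,g)$, and to use the girth hypothesis solely to guarantee that the counted vertices are distinct. Following the hint, I would count the vertices whose distance from a fixed vertex (odd $g$) or a fixed edge (even $g$) is at most $\lfloor (g-1)/2\rfloor$. Writing $g=2r+1$ in the odd case and $g=2r$ in the even case, the radius $\lfloor (g-1)/2\rfloor$ equals $r$ in the odd case and $r-1$ in the even case, and these are precisely the depths up to which the local structure is forced to be a tree.

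For odd $g$ I would fix a vertex $v$ and let $N_i$ be the set of vertices at distance exactly $i$ from $v$, for $0\le i\le r$. Since $G$ is $k$-regular, $|N_0|=1$ and $|N_1|=k$, while every vertex of $N_i$ with $1\le i\le r-1$ has $k-1$ neighbors outside $N_{i-1}\cup N_i$, giving $|N_i|=k(k-1)^{i-1}$ for $1\le i\le r$. Summing the geometric series $1+k\sum_{i=0}^{r-1}(k-1)^i$ and simplifying yields exactly $\frac{k(k-1)^{(g-1)/2}-2}{k-2}$. For even $g$ I would instead start from an edge $uv$, partition the neighborhood into the two trees rooted at $u$ and at $v$, and count vertices at distance at most $r-1$ from the edge; each endpoint has $k-1$ neighbors other than the opposite endpoint, so the total is $2\sum_{i=0}^{r-1}(k-1)^i$, which simplifies to $\frac{2(k-1)^{g/2}-2}{k-2}$.

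The main obstacle, and the only point where the girth enters, is proving that all the counted vertices are genuinely distinct, so the sums above really are lower bounds for the order rather than overcounts. The key observation is that any coincidence among the levels, or any additional edge inside the neighborhood, produces a closed walk through $v$ (resp. through the edge $uv$) of length strictly less than $g$, hence a cycle shorter than the girth, a contradiction. In the odd case, a vertex reached by two distinct paths from $v$ of length $\le r$ would close into a cycle of length $\le 2r=g-1<g$. In the even case, a vertex lying in both the tree rooted at $u$ and the tree rooted at $v$, each via a path of length $\le r-1$ avoiding the edge $uv$, would together with that edge yield a cycle of length $\le (r-1)+(r-1)+1=2r-1<g$; this shows the two trees are vertex-disjoint and internally tree-like up to depth $r-1$. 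Once distinctness is secured, the two geometric-series computations finish the proof, and I would merely verify that the resulting closed forms match the stated formula for $n_0(k,g)$.
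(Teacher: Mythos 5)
Your proposal is correct and is precisely the argument the paper sketches: counting vertices at distance at most $\lfloor (g-1)/2\rfloor$ from a fixed vertex (odd $g$) or a fixed edge (even $g$), using the girth to force the counted neighborhoods to be disjoint trees, and summing the geometric series. Your distinctness arguments (a repeated vertex yields a cycle of length at most $2r=g-1$ in the odd case, and at most $2(r-1)+1=g-1$ through the edge $uv$ in the even case) correctly supply the details the paper leaves implicit.
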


Graphs attaining this bound are called Moore cages.
In \cite{jkm} Jajcay, Kiss, and Miklavi\v c recall a nice
property of Moore cages: the number of $g$-cycles through each of its 
edges is a constant. This observation motivated the following definition. 

\begin{definition}
A $(k,g)$-graph of order $n$ with the property that each of its edges is contained in exactly $\lambda$ distinct $g-$cycles is called \emph{edge-girth-regular graph}, 
shortly an \emph{egr-graph}, and is denoted by $egr(n, k, g, \lambda)$.
\end{definition} 

For a given triple $(k,g, \lambda )$ an $egr(n, k, g, \lambda)$ with minimal order
is the analogue of cages among $(k,g)$-graphs.

\begin{definition}
An $egr(n, k, g, \lambda)$ is called \emph{extremal for the triple $(k, g, \lambda)$} if 
$n$ is the smallest order of any $egr(n, k, g, \lambda)$. We denote the order of this extremal graph with $n(k,g,\lambda)$. If $G$ is an extremal bipartite 
$egr(n,k,g,\lambda),$ then we denote its order with $n_2(k,g,\lambda)$.
\end{definition}

Drglin, Filipovski, Jajcay, and Raiman in \cite{bound} proved lower bounds on the order of edge-girth-regular graphs.

\begin{theorem}[Drglin, Filipovski, Jajcay, Raiman]{\label{lowerbound}}
Let $k\geq 3$ and $g\geq 3$ be a fixed pair of integers, and let 
$\lambda\le(k-1)^\frac{g-1}{2}$, if $g$ is odd and  $\lambda\le(k-1)^\frac{g}{2}$ 
if $g$ is even. Then
$$n(k,g,\lambda)\ge n_0(k,g)+ 
\begin{cases}
(k-1)^\frac{g-1}{2}-\lambda, \quad\text{if g is odd;}\\
\left\lceil2\frac{(k-1)^\frac{g}{2}-\lambda}{k}\right\rceil, \quad\text{~~if g is even.}
\end{cases}$$
Moreover, 
$$n_2(k,g,\lambda)\ge n_0(k,g)+2\left\lceil\frac{(k-1)^\frac{g}{2}-\lambda}{k}\right\rceil.$$
\end{theorem}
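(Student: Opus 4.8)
The plan is to sharpen the vertex count underlying the Moore bound by tracking where the non-tree edges at the deepest level of a breadth-first tree are allowed to go; throughout write $r=\lfloor g/2\rfloor$, so that $(k-1)^{r}$ is the exponent appearing in the statement in both parities. First I fix an edge $e=uv$ and grow a breadth-first tree: from $u$ to depth $r$ when $g=2r+1$, and from the pair $\{u,v\}$ (two trees, one rooted at each endpoint) to depth $r-1$ when $g=2r$. Because the girth equals $g$, any coincidence among these vertices or any second short path between two of them would produce a cycle shorter than $g$; hence the tree is genuine, its vertices are exactly the $n_0(k,g)$ counted by the Moore bound, and each of the remaining $s:=n-n_0(k,g)$ vertices sits strictly below the tree. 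The first real step is a bijection between the $g$-cycles through $e$ and certain edges joining the deepest vertices: in the odd case, a $g$-cycle through $e$ is determined by the edge joining its two vertices at distance $r$ from $u$, one lying below $v$ and the other below a different neighbour of $u$; in the even case, by the edge joining a leaf of the $u$-tree to a leaf of the $v$-tree. Consequently $\lambda$ equals the number of edges between these two leaf sets.

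The heart of the argument is a local analysis at a leaf $x$. Each leaf has $k-1$ non-tree edges, and a short-cycle argument shows that the girth forbids all but two destinations: an edge from $x$ to another deepest vertex in the \emph{same} subtree, or to any vertex nearer the root than its parent, would close a cycle of length strictly less than $g$. Therefore each non-tree edge at $x$ either contributes to $\lambda$ (it reaches the opposite leaf set) or \emph{escapes} to one of the $s$ deep vertices. Summing over the leaves below $v$ in the odd case, and over all leaves of the $u$-tree in the even case, gives in both parities $\lambda=(k-1)^{r}-B$, where $B$ is the number of escaping edges from the relevant leaf set; in particular $B=(k-1)^{r}-\lambda\ge 0$ is exactly the quantity appearing in the theorem, nonnegative under the stated hypothesis on $\lambda$.

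It then remains to convert $B$ into a lower bound on $s$. In the odd case a further short-cycle argument shows that no deep vertex can be adjacent to two leaves lying below $v$, so each deep vertex absorbs at most one escaping edge; hence $B\le s$ and $n\ge n_0(k,g)+(k-1)^{(g-1)/2}-\lambda$. In the even case the identity $\lambda=(k-1)^{r}-B$ holds from both the $u$-side and the $v$-side, producing $2B$ escaping edges into the $s$ deep vertices; as each deep vertex has degree $k$ we obtain $2B\le ks$, which rearranges to the stated ceiling bound. For the bipartite refinement the decisive point is that the $u$-side leaves and the $v$-side leaves lie in opposite colour classes, so their escaping edges land on deep vertices in \emph{different} parts; each part must then contain at least $\lceil B/k\rceil$ deep vertices, giving the stronger constant $2\lceil((k-1)^{g/2}-\lambda)/k\rceil$.

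The step I expect to be the main obstacle is the local case analysis at the leaves: one must verify carefully that, for the given $g$, every non-tree leaf edge other than the cycle-closing and escaping types is excluded by girth, and one must pin down precisely how many escaping edges a single deep vertex can receive (one in the odd case from the below-$v$ leaves, $k$ in general, but split across the two colour classes in the bipartite case). Once these combinatorial facts are secured, the rest is bookkeeping with the Moore-tree counts.
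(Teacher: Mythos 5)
Your argument is correct and is essentially the original proof of this statement: the paper quotes Theorem~\ref{lowerbound} from \cite{bound} without reproving it, and the proof in that source proceeds exactly as yours does --- the Moore tree grown from the edge $uv$, the bijection between girth cycles through $uv$ and edges joining the two deepest leaf sets, the count of $(k-1)^{\lfloor g/2\rfloor}-\lambda$ escaping edges, absorption of at most one escaping edge per outside vertex in the odd case (two leaves below $v$ with a common outside neighbour would close a cycle of length at most $2\lfloor g/2\rfloor < g$), at most $k$ per outside vertex in the even case, and the colour-class separation of the two receiving sets for the bipartite refinement. Note also that the same decomposition and cycle--edge bijection (the sets $D_{h-1}(u)$ and $D_{h-1}(v)$ with exactly $\lambda$ edges between them) is redeployed by the authors in their combinatorial proof of Theorem~\ref{cycles}, so your route coincides with the paper's own methodology as well.
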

For large values of $\lambda ,$ this is the best known general lower bound so far. 
If $\lambda$ is relatively small and $g$ is even, then Porupsánszki \cite{po} improved the bounds using 
eigenvalue techniques.
\begin{theorem}[Porupsánszki]\label{lowerbound_pi}
Let $G$ be an $egr(n,k,g,\lambda)$ graph, where $g$ is even.\\
If $g\equiv0$ (mod $4$), then
$$n(k,g,\lambda)\ge\frac{c(g,k)+k\lambda+k^{g}-2c(\frac{g}{2},k)k^{\frac{g}{2}}}{c(g,k)-c^2(\frac{g}{2},k)+k\lambda},$$
$$n_2(k,g,\lambda)\ge2\frac{c(g,k)+k\lambda+k^{g}-2c(\frac{g}{2},k)k^{\frac{g}{2}}}{c(g,k)-c^2(\frac{g}{2},k)+k\lambda}.$$
If $g\equiv2$ (mod $4$), then
$$n(k,g,\lambda)\ge\frac{c(g,k)+k\lambda+k^g}{c(g,k)+k\lambda},$$
$$n_2(k,g,\lambda)\ge\frac{2k^g}{c(g,k)+k\lambda}.$$
Here $c(\ell,k)$ denotes the number of closed walks of length $\ell$ starting of a vertex 
$v$ that do not contain circles. These formulas are $\frac{\ell}{2}$th degree polynomials of $k$.
\end{theorem}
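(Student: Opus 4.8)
The plan is to pass to the adjacency matrix $A$ of $G$ and compare two of its power-traces. Write the eigenvalues of $A$ as $k=\theta_0\ge\theta_1\ge\dots\ge\theta_{n-1}$, so that $\mathrm{tr}(A^\ell)=\sum_i\theta_i^\ell$ equals the number of closed walks of length $\ell$ in $G$. The first step is to identify these traces combinatorially. Since the girth is $g$, every closed walk of length $g/2<g$ is \emph{tree-like} (it closes no circle), and the number of such walks from a fixed vertex is exactly $c(g/2,k)$ by definition; hence $\mathrm{tr}(A^{g/2})=n\,c(g/2,k)$. Note that when $g\equiv2\pmod4$ the exponent $g/2$ is odd and a tree has no closed walk of odd length, so $c(g/2,k)=0$ in that case, which is why the two parities will collapse into one formula. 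For length $g$ a closed walk is either tree-like or traverses a single $g$-cycle exactly once; counting the latter via the egr-property (each edge lies on $\lambda$ girth cycles, each $g$-cycle has $g$ edges and $G$ has $nk/2$ edges, so there are $\lambda nk/(2g)$ $g$-cycles, each giving $2g$ oriented rooted traversals) yields
\[
\mathrm{tr}(A^g)=n\,c(g,k)+\lambda n k=n\bigl(c(g,k)+k\lambda\bigr).
\]

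The second step is a Cauchy--Schwarz estimate on the matrix $B:=A^{g/2}$, whose eigenvalues are $\mu_i=\theta_i^{g/2}$ and whose largest eigenvalue is $\mu_0=k^{g/2}$. Writing $a=c(g/2,k)$, $b=c(g,k)+k\lambda$ and $p=k^{g/2}$, the two identities read $\mathrm{tr}(B)=na$ and $\mathrm{tr}(B^2)=\mathrm{tr}(A^g)=nb$. Removing the top eigenvalue and applying $\bigl(\sum_{i\ge1}\mu_i\bigr)^2\le(n-1)\sum_{i\ge1}\mu_i^2$ gives $(na-p)^2\le(n-1)(nb-p^2)$. The crucial simplification is that the constant terms cancel because $k^g=(k^{g/2})^2=p^2$, leaving a relation linear in $n$ after dividing by $n$, namely $n(a^2-b)+(p^2+b-2ap)\le0$. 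Provided $b-a^2>0$ one may divide and reverse the inequality to obtain
\[
n(k,g,\lambda)\ge\frac{k^g+c(g,k)+k\lambda-2c(g/2,k)k^{g/2}}{c(g,k)+k\lambda-c^2(g/2,k)},
\]
which is the $g\equiv0\pmod4$ bound and which reduces to $\bigl(k^g+c(g,k)+k\lambda\bigr)/\bigl(c(g,k)+k\lambda\bigr)$ exactly when $a=c(g/2,k)=0$, i.e. when $g\equiv2\pmod4$.

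For the bipartite bounds I would exploit that a bipartite $k$-regular graph has $-k$ in its spectrum, so $B=A^{g/2}$ carries the extra extreme eigenvalue $(-k)^{g/2}$. When $g\equiv0\pmod4$ this equals $+k^{g/2}$, so $B$ has \emph{two} top eigenvalues equal to $p$; removing both and running the same Cauchy--Schwarz over the remaining $n-2$ eigenvalues reproduces the computation with an extra factor $2$ and yields the stated $n_2(k,g,\lambda)$. When $g\equiv2\pmod4$ the two extreme eigenvalues are $\pm k^{g/2}$ and cancel in $\mathrm{tr}(B)=0$; here the cleanest route is the direct estimate $\mathrm{tr}(A^g)=\sum_i\theta_i^g\ge\theta_0^g+\theta_{n-1}^g=2k^g$, giving $nb\ge2k^g$ and hence $n_2(k,g,\lambda)\ge2k^g/\bigl(c(g,k)+k\lambda\bigr)$.

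The main obstacles are two bookkeeping points rather than deep ideas. First, one must justify the trace identities carefully: that $c(\ell,k)$ genuinely counts the circle-free closed walks independently of the base vertex (it agrees with the count in the infinite $k$-regular tree, so girth $g$ suffices even though the radius-$g/2$ ball itself need not be a tree), and that the non-tree-like length-$g$ walks are precisely the traversals of $g$-cycles. Second, and essential for the direction of the final inequality, one must verify positivity of the denominator $c(g,k)+k\lambda-c^2(g/2,k)$. This I would deduce from the entrywise identity $(A^g)_{vv}=\sum_w (A^{g/2})_{vw}^2\ge (A^{g/2})_{vv}^2=c(g/2,k)^2$: summing over $v$ gives $\mathrm{tr}(A^g)\ge n\,c(g/2,k)^2$, i.e. $b\ge a^2$, with strict inequality because $A^{g/2}$ has nonzero off-diagonal entries, so that $b-a^2>0$ and the division reverses the inequality in the correct sense.
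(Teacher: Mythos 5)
Your proof is correct: the trace identities $\mathrm{tr}(A^{g/2})=n\,c(g/2,k)$ and $\mathrm{tr}(A^{g})=n\bigl(c(g,k)+k\lambda\bigr)$ (via the universal-cover/tree-like walk argument and the count of $k\lambda$ rooted girth-cycle traversals per vertex), the Cauchy--Schwarz step after removing the Perron eigenvalue --- in which $k^{g}=p^{2}$ indeed cancels and yields $n(b-a^{2})\ge p^{2}+b-2ap$ --- the strict positivity $c(g,k)+k\lambda>c^{2}(g/2,k)$ from $(A^{g})_{vv}=\sum_{w}(A^{g/2})_{vw}^{2}>(A^{g/2})_{vv}^{2}$, and the bipartite refinements (doubled Perron eigenvalue of $A^{g/2}$ when $g\equiv0 \pmod 4$, and $\mathrm{tr}(A^{g})\ge\theta_{0}^{g}+\theta_{n-1}^{g}=2k^{g}$ when $g\equiv2 \pmod 4$) all check out and reproduce exactly the stated bounds, including the collapse $c(g/2,k)=0$ for $g\equiv2\pmod 4$. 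The paper itself states this theorem without proof, importing it from \cite{po} and describing it only as obtained by ``eigenvalue techniques,'' and your spectral moment/Cauchy--Schwarz argument is precisely that approach, so there is nothing to contrast.
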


Let us remark that Theorem \ref{lowerbound} gives 
$O\left(k^{\left[\frac{g}{2}\right]-1}\right)$ lower bounds on the order of extremal $n(k,g,\lambda)$.
On the other hand, if $\lambda=O(k^{\frac{g}{2}-1})$, then the inequalities of 
Theorem \ref{lowerbound_pi} give 
$\Theta\left(k^{\frac{g}{2} }\right)$  lower bounds. Hence if  
$\lambda$ is small, then these lower bounds on the order of extremal egr-graphs 
of even girth larger than the previously known ones. 

The aim of the present paper is on the one hand, to introduce new families of egr-graphs and almost egr-graphs ($(k,g)$-graphs with the property, that each of its edges is
contained in $\lambda _1$ or $\lambda _2$ distinct $g$-cycles, the exact definition is given at the beginning of Section 3) using cleverly chosen substructures of finite geometries, in particular $t$-good structures. 
On the other hand, we prove new lower bounds on the order of (almost) egr-graphs.

In Section 2, we consider graphs arising from incidence graphs of finite planes by the deletion of some specific $t$-good structures. We prove that these graphs are
egr-graphs with small order. In particular, if t=1, then we have extremal egr-graphs, if $t=2$ or $3$, then their order is only a constant higher than the bound given in Theorem \ref{lowerbound}. Finally, we show 
that the point-hyperplane incidence graph of $\mathrm{PG}(n,q),$ the $n$-dimensional finite projective space of order $q$, is an egr-graph of girth 4 for all $n\geq 3, $ moreover it is extremal for $n=3.$

The concept of egr-graphs was generalized by Poto\v cnik and Vidali \cite{pv}, who 
introduced girth-regular graphs. In Section 3, we consider some girth-regular graphs which are almost egr-graphs. We construct graphs of girth $5$ with geometric arguments. It is important to note that this technique of construction has been used previously by a large number of authors. In these papers the authors take the incidence graphs of elliptic semiplanes of type $1$ or $2$ and introduce two operations that in \cite{AABL11} the authors called reduction and amalgam. The first consist in delete set of vertices in the incidence graph that correspond to collinear points and parallel lines in the affine plane, and the second consists in adding a set of edges with some properties in the set of vertices on the incidence graph with the same characteristics. In this paper our arguments are geometric, and we describe a construction for Hoffman-Singleton graph and for a $(6,5)$-graph, after we generalize these constructions. Some of our graphs are graphs originally constructed by Brown \cite{Br-5} and later on by Abreu, Abajo and their co-authors \cite{AABL11,AABB17,ABBM19,AB21}. 
When $q$ is a power of $2,$ we generalize a construction originally due to Araujo-Pardo and Leemans \cite{AL22}, who gave for $q=4$. 
The constructed egr-graphs are related to the cage problem. In some cases, their orders are equal to the order of the smallest known $(k,5)$-graphs.

Finally, in Section 4, we give lower bounds on the order of girth-regular graphs. 
These bounds are generalizations of the lower bounds on the order of extremal egr-graphs proved in \cite{bound} and \cite{po}. In particular, we present a new, combinatorially proved lower bound on the order of girth-regular graphs having even girth.

\section{Edge-girth-regular graphs arising from finite geometries}

In this section, we show that some subgraphs of the point-hyperplane incidence graph of a finite projective space are egr-graphs. These graphs are not new. Many of them 
(and references to their original constructions) can be found in a paper by G\'acs and 
H\'eger \cite{GH08}, however, previously the authors did not examine the edge-girth-regularity of the graphs. Throughout this paper, if $G$ is an incidence graph, then its vertices will be called points and hyperplanes (lines in the planar case) according to whether they correspond to a point or a hyperplane of the geometric structure. For a detailed introduction to  the concepts from finite geometries we use, we refer the reader to the book by Kiss and Sz\H onyi \cite{KSz}. Here we give only the most necessary definitions.

\begin{definition}
Let $\mathcal{S}=(\mathcal{P},\mathcal{L},\mathrm{I})$ be a point-line incidence
geometry, $\mathcal{P}_0\subset \mathcal{P}$ and $\mathcal{L}_0\subset \mathcal{L}$ be  subsets of points and lines in $\mathcal{S}$, respectively. The pair 
$(\mathcal{P}_0,\mathcal{L}_0)$ is called a \emph{$t$-good structure} in $\mathcal{S}$
if there are $t$ lines of $\mathcal{L}_0$ through any point not in $\mathcal{P}_0$,
and  there are $t$ points of $\mathcal{P}_0$ on any line not in $\mathcal{L}_0$,
\end{definition}

In \cite{GH08} the following  general construction method was presented.

\begin{theorem}[G\'acs, H\'eger] 
\label{del-t-good}
Suppose that $(\mathcal{P}_0,\mathcal{L}_0)$ is a $t$-good structure in a generalized 
$n$-gon of order $(q,q).$ Deleting the points and lines of $\mathcal{P}_0$ and 
$\mathcal{L}_0,$ respectively, the incidence graph of the resulting structure is a 
$(q+ 1-t,g)$-graph with $g\geq 2n$.
\end{theorem}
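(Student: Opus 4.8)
The plan is to verify the two defining properties of a $(q+1-t,g)$-graph separately, namely $(q+1-t)$-regularity and girth at least $2n$. Throughout I would use the standard characterization that the incidence graph $\Gamma$ of a generalized $n$-gon of order $(q,q)$ is a bipartite, $(q+1)$-regular graph whose girth equals $2n$; its two colour classes are the point set $\mathcal{P}$ and the line set $\mathcal{L}$, and two vertices are adjacent precisely when the corresponding point and line are incident. Deleting $\mathcal{P}_0$ and $\mathcal{L}_0$ then amounts to passing to the subgraph $\Gamma'$ of $\Gamma$ induced on the surviving vertices $(\mathcal{P}\setminus\mathcal{P}_0)\cup(\mathcal{L}\setminus\mathcal{L}_0)$, where an edge survives exactly when both of its endpoints do.

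For regularity I would argue by a direct degree count driven by the $t$-good condition. Take a surviving point $P\in\mathcal{P}\setminus\mathcal{P}_0$. Since the geometry has order $(q,q)$, exactly $q+1$ lines pass through $P$ in $\mathcal{S}$, and these are its neighbours in $\Gamma$. The $t$-good hypothesis asserts that precisely $t$ of the lines through a point outside $\mathcal{P}_0$ belong to $\mathcal{L}_0$; those $t$ neighbours are deleted, while the remaining $q+1-t$ survive, so $P$ has degree $q+1-t$ in $\Gamma'$. By the dual half of the $t$-good condition, any surviving line $\ell\in\mathcal{L}\setminus\mathcal{L}_0$ carries exactly $t$ points of $\mathcal{P}_0$, hence $q+1-t$ of its points survive and $\ell$ likewise has degree $q+1-t$. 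Since every surviving vertex of either class has the same degree, $\Gamma'$ is $(q+1-t)$-regular.

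For the girth bound I would invoke the monotonicity of girth under vertex deletion. As $\Gamma'$ is a subgraph of $\Gamma$, every cycle of $\Gamma'$ is also a cycle of $\Gamma$; since $\Gamma$ has girth $2n$ it contains no cycle of length less than $2n$, and therefore neither does $\Gamma'$. Consequently the girth $g$ of $\Gamma'$ satisfies $g\geq 2n$. Deleting vertices can only destroy short cycles, never create them, which is precisely why the conclusion is an inequality rather than an equality: removing the elements of $\mathcal{P}_0$ and $\mathcal{L}_0$ may eliminate all the $2n$-cycles through a given edge and push the girth strictly above $2n$.

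The routine part is the degree count; the genuine subtlety I expect is concealed in the phrase \textbf{$(q+1-t,g)$-graph}, which in the paper's convention also demands that $\Gamma'$ be connected and actually contain a cycle so that $g$ is well defined. Connectedness in full generality does not follow from the $t$-good condition alone, and this is the step I would expect to require care: one would either impose a mild extra hypothesis (for instance $q+1-t\geq 2$, so that no isolated vertices or pendant structure arise) or appeal to the concrete shape of the particular $t$-good configurations used later in the section. Granting this, the two properties established above complete the argument.
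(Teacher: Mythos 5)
The paper gives no proof of this statement---it is quoted as a known result of G\'acs and H\'eger \cite{GH08}---so your argument can only be measured against the standard one, which it reproduces correctly: the $t$-good condition deletes exactly $t$ of the $q+1$ neighbours of each surviving vertex on either side of the bipartition, yielding $(q+1-t)$-regularity, and girth can only increase when passing to a subgraph of the girth-$2n$ incidence graph. Your closing caveat about connectedness is a legitimate fine point (the paper's definition of a $(k,g)$-graph formally requires it), and it is handled in the standard way: if the deleted graph were disconnected one passes to a connected component, which is still $(q+1-t)$-regular of girth at least $2n$, and a cycle certainly exists whenever $q+1-t\geq 2$.
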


Our first class of egr-graphs related to Baer subplanes. It is well-known that
if a finite projective plane $\Pi _{q^2}$ of order $q^2$ contains a Baer subplane 
$\Pi _{q}$ of ordert $q,$ then the points and lines of $\Pi _{q}$ form a $1$-good structure in $\Pi _{q^2}.$

\begin{theorem}
Suppose that a finite projective plane $\Pi _{q^2}$ of order $q^2$ contains a Baer subplane $\Pi _{q}$ of order 
$q.$ Then there exists an extremal bipartite 
$$egr(2(q^4-q), q^2,6,(q^2-1)(q^4-3q^2+q+2)).$$ 
If $q$ is odd, then it is also an extremal egr-graph.
\end{theorem}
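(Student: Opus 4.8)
The plan is to identify the asserted graph $G$ with the incidence graph of the geometry obtained from $\Pi_{q^2}$ by deleting the points and lines of the Baer subplane $\Pi_q$, and then to read off all four parameters from the geometry of $\Pi_{q^2}$. Since the points and lines of $\Pi_q$ form a $1$-good structure $(\mathcal{P}_0,\mathcal{L}_0)$, and a projective plane is a generalized $3$-gon of order $(q^2,q^2)$, Theorem \ref{del-t-good} immediately gives that $G$ is a bipartite $(q^2,g)$-graph with $g\ge 6$. The order is obtained by subtracting the $q^2+q+1$ subplane points (respectively lines) from the $q^4+q^2+1$ points (respectively lines) of $\Pi_{q^2}$, leaving $q^4-q$ of each, hence $2(q^4-q)$ vertices in total. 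The girth is exactly $6$ once a girth cycle is exhibited, which falls out of the count below; note also that $G$ has no $4$-cycle, since two points determine a unique common line.

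The heart of the proof is to show that every surviving edge lies on the same number of $6$-cycles. Fix a surviving edge $(P,\ell)$, so $P\notin\mathcal{P}_0$, $\ell\notin\mathcal{L}_0$, $P\in\ell$. I would use the bijection between $6$-cycles through $(P,\ell)$ and triangles $\{P,P',P''\}$ of $\Pi_{q^2}$ with $\ell=PP'$ whose three vertices avoid $\mathcal{P}_0$ and whose three sides avoid $\mathcal{L}_0$. The second vertex $P'$ ranges over $\ell\setminus\{P\}$ with the unique point of $\mathcal{P}_0$ on $\ell$ removed, giving $q^2-1$ choices. For the apex, I start from the $q^4$ points off $\ell$ and remove, by inclusion--exclusion, the bad sets $A=\{P'':P''\in\mathcal{P}_0\}$, $B=\{P'':PP''\in\mathcal{L}_0\}$ and $C=\{P'':P'P''\in\mathcal{L}_0\}$. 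The single and pairwise terms follow from the $1$-good axioms together with the fact that a line of $\mathcal{L}_0$ carries $q+1$ points of $\mathcal{P}_0$. The delicate step, and the main obstacle, is the evaluation of $|B\cap C|$ and $|A\cap B\cap C|$: I must show that the unique line $m\in\mathcal{L}_0$ through $P$ and the unique line $m'\in\mathcal{L}_0$ through $P'$ are distinct (otherwise $m=m'=PP'=\ell\notin\mathcal{L}_0$, a contradiction) and that their intersection point lies in $\mathcal{P}_0$ yet off $\ell$; the latter holds because the unique point of $\mathcal{P}_0$ on $\ell$ cannot lie on $m$ or $m'$, as it would then coincide with the non-subplane points $P$ or $P'$. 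This forces $|B\cap C|=|A\cap B\cap C|=1$, and the inclusion--exclusion leaves exactly $q^4-3q^2+q+2$ admissible apices. Thus each edge carries $(q^2-1)(q^4-3q^2+q+2)$ girth cycles; since every count used only the $1$-good axioms and not the particular edge, this value is constant and $G$ is an $egr$-graph with the stated $\lambda$.

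For extremality I would substitute $k=q^2$, $g=6$ and this $\lambda$ into Theorem \ref{lowerbound}. A short computation gives $n_0(q^2,6)=2q^4-2q^2+2$ and $(k-1)^{g/2}-\lambda=q^4-q^3-2q^2+q+1$; since $0<(q+1)/q^2<1$, the bipartite bound becomes $n_0(q^2,6)+2(q^2-q-1)=2q^4-2q$, exactly the order of $G$, so $G$ is an extremal \emph{bipartite} $egr$-graph for all $q$. The general bound, applied to the same data, yields only $n(q^2,6,\lambda)\ge 2q^4-2q-1$, one less than the order. The final step, for odd $q$, is a parity argument: then $k=q^2$ is odd, so any $q^2$-regular graph has an even number of vertices, whereas $2q^4-2q-1$ is odd; hence the bound sharpens to $2q^4-2q$ and $G$ is extremal among all $egr$-graphs. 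This parity phenomenon is precisely why the stronger conclusion is stated only for odd $q$.
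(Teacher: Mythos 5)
Your proposal is correct and follows essentially the same route as the paper: the identical deletion construction with order count, the same enumeration of $6$-cycles through a fixed edge (your inclusion--exclusion over the bad sets $A$, $B$, $C$ is just a rebookkeeping of the paper's direct count of the union of the two subplane lines through $u_1,u_2$, the line $v_1$, and the subplane points, yielding the same $q^4-3q^2+q+2$ admissible apices), and the same extremality argument via Theorem~\ref{lowerbound} together with the parity observation that an odd-degree regular graph has even order. Your treatment of $|B\cap C|=|A\cap B\cap C|=1$ is a slightly more explicit justification of a step the paper absorbs into its figure, but it is not a different method.
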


\begin{proof}
Delete all the points and all the lines of $\Pi _q$ from $\Pi _{q^2}$.
We claim that the incidence graph $G$ of the remaining points and lines satisfies the condition of the theorem. The points and the lines of $\Pi _{q}$ form a $1$-good structure, hence $G$ is $q^2$-regular and its girth is at least $6.$. We deleted $q^2+q+1$ points and the same number of lines of $\Pi _{q^2}$, so the order of $G$ is
    $$2(q^4+q^2+1)-2(q^2+q+1)=2(q^4-q).$$

Now, we show that every edge of $G$ is contained in exactly $\lambda $ distinct $6$-cycles where  $\lambda =(q^2-1)(q^4-3q^2+q+2)$. Take an edge $u_1v_1$ and 
count the number of 
$6$-cycles $u_1v_1u_2v_2u_3v_3$. Without loss of generality, we may assume that $u_1$ is a point and $v_1$ is a line of $\Pi _{q^2}$. We have $(q^2-1)$ possible choices for the point $u_2$. After choosing $u_2,$ the choice of $u_3$ uniquely determines the lines $v_2$ and $v_3.$



\begin{figure}[h]
\begin{center}
\includegraphics[scale=0.9]{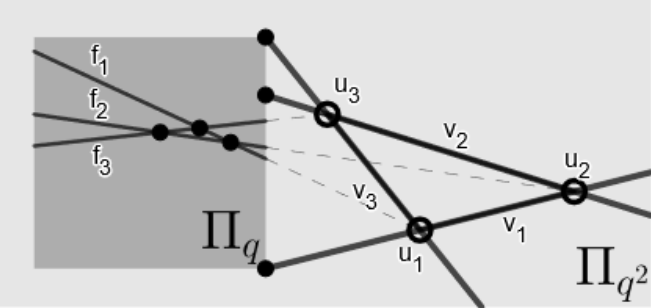}
\caption{Deleted points, Theorem 2.3.}
\label{fig-1}
\end{center}
\end{figure}

Let $f_i$ denote the unique line of $\Pi _{q}$ that is incident with $u_i$. 
A $6-$cycle through the edge $u_1v_1$ contains a deleted element if and only if $u_3$ is incident with $f_1$ or $f_3$. In addition, $u_3$ cannot be incident with the line $v_1$,
because then $u_1v_1u_2v_2u_3v_3$ would not be a $6$-cycle.
The union of these three lines and $\Pi _{q}$ contains 
$$2(q^2-q)+(q^2-2)+(q^2+q+1)=4q^2-q-1$$ 
points  (see Figure \ref{fig-1}), therefore the number of possible choices for the point $u_3$ is
$$(q^4+q^2+1)-(4q^2-q-1)=q^4-3q^2+q+2.$$
So the number of distinct $6-$cycles through any edge is exactly 
$\lambda .$ 

The lower bound in Theorem \ref{lowerbound} gives
$$
\begin{aligned}
    n(q^2,6,(q^2-1)&(q^4-3q^2+q+2))\\
    &\ge n_0(q^2,6)+\left\lceil2\frac{(q^2-1)^3-(q^2-1)(q^4-3q^2+q+2)}{q^2}\right\rceil \\
    &=2(q^4-q^2+1)+2(q^2-q-2)+\left\lceil \frac{2q+2}{q^2}\right\rceil
\geq 2(q^4-q)-1,
\end{aligned}
$$
and
$$n_2(q^2,6,(q^2-1)(q^4-3q^2+q+2))\geq 2(q^4-q).$$
Hence $G$ is an extremal bipartite egr-graph for all $q.$ Moreover,
if $q$ is odd, then the order of a $q$-regular graph is even. So in this case,
$$n(q^2,6,(q^2-1)(q^4-3q^2+q+2))\geq 2(q^4-q),$$
thus $G$ is an extremal egr-graph.
\end{proof}

Let us remark that the points and lines of the union of $t$ disjoint Baer 
subplanes form a $t$-good structure in $\Pi _{q^2}.$ However, for $t>1$ the corresponding incidence graph is not edge-girth-regular.

The next two classes of egr-graphs result from the deletion of degenerate subplanes.

\begin{theorem}
\label{t=2}
Suppose that there exists a finite projective plane $\Pi _q$ of order $q>3$. Then there exists a bipartite 
$$egr(2(q^2-q), q-1,6,(q-2)(q-3)^2).$$
\end{theorem}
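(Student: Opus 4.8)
The plan is to mimic the proof of Theorem 2.3, but now deleting a \emph{degenerate} subplane rather than a Baer subplane, and to show that this deletion yields a $2$-good structure. First I would fix the degenerate subplane to be deleted: the natural candidate producing a $2$-good structure is a \emph{triangle} configuration, namely three points forming a triangle together with the three lines joining them (equivalently, the dual). I would verify that any point of $\Pi_q$ off this configuration lies on exactly $t=2$ of the three deleted lines, and dually that any undeleted line contains exactly $t=2$ of the three deleted points; this is what makes $(\mathcal{P}_0,\mathcal{L}_0)$ a $2$-good structure with $|\mathcal{P}_0|=|\mathcal{L}_0|=3$. By Theorem \ref{del-t-good} the incidence graph $G$ of the remaining structure is then $(q+1-2)=(q-1)$-regular with girth at least $6$. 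Counting vertices, we delete $3$ points and $3$ lines from the $2(q^2+q+1)$ vertices of the incidence graph of $\Pi_q$, giving order $2(q^2+q+1)-6=2(q^2+q-2)=2(q-1)(q+2)$; I would need to reconcile this with the claimed $2(q^2-q)$, which suggests the deleted degenerate subplane is larger (for instance a near-pencil or a configuration on $q+2$ points), so the first genuine task is to identify precisely which degenerate subplane on the right number of points and lines gives a $2$-good structure and order exactly $2(q^2-q)$.

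Once the correct configuration is pinned down, the core of the proof is the edge-girth count, carried out exactly as in Theorem 2.3. I would take an edge $u_1v_1$ of $G$ with $u_1$ a point and $v_1$ a line, and count $6$-cycles $u_1v_1u_2v_2u_3v_3$. After choosing $u_2$ (there are $q-1$ surviving points on $v_1$ other than $u_1$, but one must be careful about which incidences survive) and then $u_3$, the lines $v_2,v_3$ are forced, so the count reduces to counting admissible choices of $u_3$. The admissible $u_3$ are the surviving points not on $v_1$ and not forced into a deleted line through $u_1$ or $u_3$; concretely I would count the points of $\Pi_q$ lying on the relevant "forbidden" lines (the line $v_1$ plus the deleted lines meeting $u_1$ and $u_3$) together with the deleted configuration, and subtract from $q^2+q+1$. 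The target value $\lambda=(q-2)(q-3)^2$ tells me the inner count of valid $u_3$ per choice of $u_2$ must be $(q-3)^2$ while the outer factor $(q-2)$ counts the choices of $u_2$; this factorisation is a strong check on the configuration and on which incidences are excluded.

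The main obstacle I anticipate is the case analysis hidden in the word ``exactly.'' Because we delete a $2$-good structure rather than a $1$-good one, a given edge $u_1v_1$ may be incident to the deleted configuration in more than one way (the point $u_1$ lies on two deleted lines, and the line $v_1$ contains two deleted points), so unlike the Baer-subplane case the forbidden sets for $u_3$ can \emph{overlap} and the count of excluded points can a priori depend on the position of $u_1v_1$ relative to the deleted triangle/configuration. To get a single constant $\lambda$ I must show that these overlaps cancel uniformly, i.e. that the inclusion–exclusion over the deleted lines through $u_1$ and $u_3$ and the line $v_1$ always removes the same number of points regardless of the chosen edge. I expect this to follow from the homogeneity of the $2$-good structure (every surviving point sees the configuration in the same combinatorial pattern), but verifying it requires checking that no surviving point $u_1$ is ``special,'' and that the two deleted lines through $u_1$ meet $v_1$ and the configuration in a position-independent way; this uniformity, rather than the arithmetic, is the crux.

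Finally, I would record that $G$ is bipartite by construction (point side versus line side of the incidence graph), so the statement asserts a bipartite egr-graph, and note that no extremality claim is made here (in contrast to Theorem 2.3), consistent with the remark that for $t>1$ the corresponding graphs are typically only ``a constant higher'' than the bound of Theorem \ref{lowerbound}; hence no lower-bound comparison is needed to complete the proof, only the regularity, the girth, the order, and the constant edge-girth count $\lambda=(q-2)(q-3)^2$.
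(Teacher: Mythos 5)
There is a genuine gap: you never identify the structure that is actually deleted, and your initial candidate is wrong in a way that cannot be patched by adjusting its size. A triangle (three points plus their three joining lines) is not a $2$-good structure at all: a point of $\Pi_q$ lying on none of the three sides is incident with \emph{zero} deleted lines, and a line through exactly one vertex meets the deleted point set in one point, so the defining condition of $2$-goodness fails generically, not just numerically. The order mismatch you noticed ($2(q^2+q-2)$ versus $2(q^2-q)$) is a symptom of this, but your proposed fixes (``a near-pencil or a configuration on $q+2$ points'') are also off: the deleted sets must have $2q+1$ points and $2q+1$ lines each. The paper's configuration is: two points $P_1,P_2$, the line $e_1=P_1P_2$, and a second line $e_2$ through $P_1$; one deletes $\mathcal{L}_0=$ all lines through $P_1$ or $P_2$ and $\mathcal{P}_0=$ all points on $e_1$ or $e_2$. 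Then every surviving point lies on exactly the two deleted lines joining it to $P_1$ and to $P_2$ (distinct since the point is off $e_1$), and dually every surviving line meets $\mathcal{P}_0$ in exactly its intersections with $e_1$ and $e_2$ (distinct since the line avoids $P_1$), so the pair is $2$-good and the order comes out to $2(q^2+q+1)-2(2q+1)=2(q^2-q)$. Without this configuration, nothing downstream in your plan can be executed: the count of choices for $u_2$ (which is $q-2$, not your tentative $q-1$, because exactly two points of $v_1$ are deleted), the list of forbidden lines for $u_3$, and the value $(q-3)^2$ all depend on it.

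On your third paragraph: the uniformity worry is legitimate but, once the configuration is fixed, it resolves by a position-independent incidence pattern rather than by a delicate cancellation. The forbidden lines for $u_3$ are exactly seven: $v_1$, $e_1$, $e_2$, $u_1P_1$, $u_1P_2$, $u_2P_1$, $u_2P_2$, and their concurrencies are forced by the constraints $u_1,u_2\notin\mathcal{P}_0$, $v_1\notin\mathcal{L}_0$: four of them meet at $P_1$, three at $P_2$, three at $u_1$, three at $u_2$, and the remaining six pairs meet in six further distinct points, so by inclusion--exclusion the union always contains $7(q+1)-15=7q-8$ points of $\Pi_q$, independently of the chosen edge. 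This gives $(q^2+q+1)-(7q-8)=(q-3)^2$ admissible choices of $u_3$ for each of the $q-2$ choices of $u_2$, hence $\lambda=(q-2)(q-3)^2$. Your overall architecture ($2$-goodness plus Theorem \ref{del-t-good} for regularity and girth, then a forbidden-line count for $\lambda$, with bipartiteness for free and no extremality claim) does match the paper's, but as written the proposal stops exactly at the step that constitutes the content of the theorem.
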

\begin{proof}
Take two points $P_1$, $P_2$, the line $e_1=P_1P_2$, and another line $e_2$ that is incident with $P_1$ in $\Pi _q$. Let $\mathcal{L}_0$ be the set of lines that are incident with $P_1$ or $P_2$, and $\mathcal{P}_0$ be the set of points that are incident with $e_1$ or $e_2$. Obviously, $(\mathcal{P}_0,\mathcal{L}_0)$ is a 
$2$-good structure in $\Pi _q$. Hence, by Theorem \ref{del-t-good}, after deleting the
points and lines of $\mathcal{P}_0$ and 
$\mathcal{L}_0$ from $\Pi _q$, the incidence graph $G$ of the resulting structure is a 
$(q-1,g)$-graph with $g\geq 6$. The plane $\Pi _q$ has $q^2+q+1$ points and the same number of lines.
We deleted $2q+1$ lines and $2q+1$ points, so the order of $G$ is 
$$2(q^2+q+1)-2(2q+1)=2(q^2-q).$$

Now, we show that every edge is contained in exactly $\lambda =(q-2)(q-3)^2$ distinct $6-$cycles. Take any edge $u_1v_1$ of $G$ and count the number of $6-$cycles $u_1v_1u_2v_2u_3v_3$. Without loss of generality, we may assume that $u_1$ is a point and $v_1$ is a line of $\Pi _q$. There are $(q-2)$ possible choices for the point $u_2$. 
Again, the choice of $u_3$ uniquely determines the lines $v_2$ and $v_3.$

A $6-$cycle through the edge $u_1v_1$ contains a deleted element if and only if $u_3$ is incident with one of the following lines: $e_1$, $e_2$, $u_1P_1$, $u_1P_2$, $u_2P_1$ or $u_2P_2$. In addition, $u_3$ cannot be incident with the line $u_1u_2$, 
because then $u_1v_1u_2v_2u_3v_3$ would not be a $6$-cycle.
\begin{figure}[h]
\begin{center}
\includegraphics[scale=0.9]{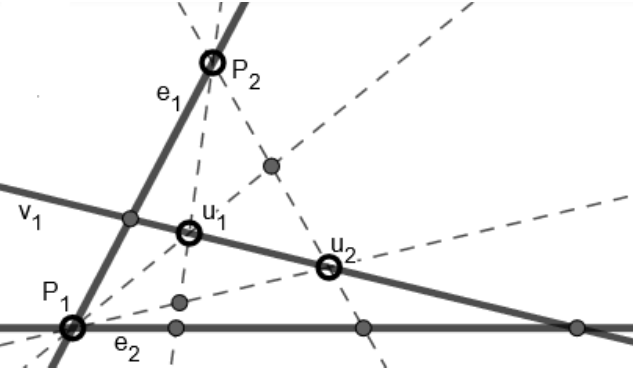}
\caption{Deleted points, Theorem 2.4.}
\label{fig-2}
\end{center}
\end{figure}
The union of these seven lines contains $7q-8$ points (see Figure \ref{fig-2}), therefore the number of possible choices for the point $u_3$ is
$$(q^2+q+1)-(7q-8)=q^2-6q+9=(q-3)^2.$$
So the number of distinct $6-$cycles through any edge is equal to $\lambda .$
As $q>3,$ we have $\lambda >0,$ hence the girth of $G$ is $6.$ Thus $G$ satisfies all conditions of the theorem.
\end{proof}

\begin{theorem}
\label{t=3}
Suppose that there exists a finite projective plane $\Pi _q$ of order $q>4$. Then there exists a bipartite 
$$egr(2(q-1)^2, q-2,6,(q-3)(q^2-9q+21)).$$
\end{theorem}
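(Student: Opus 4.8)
The plan is to imitate the proofs of the two preceding theorems, now deleting a triangle. Fix three non-collinear points $P_1,P_2,P_3$ of $\Pi_q$, let $e_1=P_2P_3$, $e_2=P_1P_3$, $e_3=P_1P_2$ be its sides, and set $\mathcal{L}_0$ to be the lines through $P_1$, $P_2$ or $P_3$ and $\mathcal{P}_0$ to be the points on $e_1$, $e_2$ or $e_3$. A point off the three sides lies on exactly the three lines $QP_1,QP_2,QP_3$ of $\mathcal{L}_0$, and a line through no vertex meets the three sides in three distinct points of $\mathcal{P}_0$; hence $(\mathcal{P}_0,\mathcal{L}_0)$ is a $3$-good structure. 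By Theorem \ref{del-t-good} the incidence graph $G$ of the surviving points and lines is $(q-2)$-regular of girth at least $6$. Inclusion--exclusion gives $|\mathcal{P}_0|=|\mathcal{L}_0|=3(q+1)-3=3q$, so the order of $G$ is $2(q^2+q+1)-6q=2(q-1)^2$, as required.

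For the cycle count I would fix an edge $u_1v_1$ with $u_1$ a point and $v_1$ a line and count hexagons $u_1v_1u_2v_2u_3v_3$. The surviving line $v_1$ carries $q-2$ points, giving $q-3$ choices for $u_2\neq u_1$; once $u_3$ is chosen the lines $v_2=u_2u_3$ and $v_3=u_1u_3$ are determined. As in the proof of Theorem \ref{t=2}, a valid closed hexagon of $G$ arises precisely when $u_3$ avoids the ten lines $e_1,e_2,e_3$, $u_1P_1,u_1P_2,u_1P_3$, $u_2P_1,u_2P_2,u_2P_3$ and $v_1=u_1u_2$: the first three forbid $u_3\in\mathcal{P}_0$, the next six keep $v_3$ and $v_2$ out of $\mathcal{L}_0$, and the last prevents the degenerate case of $u_1,u_2,u_3$ collinear. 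Thus $\lambda=(q-3)\cdot(\text{number of points of }\Pi_q\text{ off these ten lines})$.

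The heart of the argument -- and the one step that is not mechanical -- is to show that these ten lines always cover exactly $10q-20$ points, independently of the choice of $u_1,v_1,u_2$. I would count via $|\bigcup L_i|=10(q+1)-\sum_P(m_P-1)$, where $m_P$ is the number of the ten lines through $P$ and the sum runs over the covered points. The expected concurrences are the five quadruple points $P_1,P_2,P_3$ (each on its two sides together with $u_1P_i$ and $u_2P_i$) and $u_1,u_2$ (each on its three lines $u_1P_i$, resp. $u_2P_i$, together with $v_1$), contributing excess $5\cdot 3=15$. The obstacle is to rule out any unexpected third line through the remaining intersections, and this is exactly where the rigidity of the triangle enters: since $e_i$ contains $P_j$ for $j\neq i$, a side $e_i$ meets every $u_2P_j$ ($j\neq i$) already at the vertex $P_j$, and likewise for the $u_1P_j$. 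Using only that $u_1,u_2\notin\mathcal{P}_0$ and $v_1\notin\mathcal{L}_0$, these forced incidences pin down the genuinely new intersections $v_1\cap e_i$, $u_1P_i\cap e_i$, $u_2P_i\cap e_i$ and $u_1P_i\cap u_2P_j$ ($i\neq j$) as $3+3+3+6=15$ distinct simple double points, contributing a further excess $15$. Hence $\sum_P(m_P-1)=30$ and the union has $10q-20$ points.

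Finally I would conclude. The number of admissible $u_3$ is $(q^2+q+1)-(10q-20)=q^2-9q+21$, so every edge lies on exactly $(q-3)(q^2-9q+21)$ hexagons; since $q^2-9q+21$ has negative discriminant and $q-3>0$ for $q>4$, this count is positive, the girth is therefore exactly $6$, and $G$ is the claimed bipartite egr-graph.
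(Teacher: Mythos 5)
Your proof is correct and takes essentially the same route as the paper: the same $3$-good structure (triangle vertices and sides), the same order count $2(q^2+q+1)-6q=2(q-1)^2$, and the same enumeration of hexagons via $q-3$ choices for $u_2$ and the requirement that $u_3$ avoid the ten lines covering $10q-20$ points, yielding $\lambda=(q-3)(q^2-9q+21)$. The only difference is that you carry out explicitly the inclusion--exclusion (excess $15$ from the five concurrence points $P_1,P_2,P_3,u_1,u_2$ plus $15$ simple double points, using $u_1,u_2\notin\mathcal{P}_0$ and $v_1\notin\mathcal{L}_0$) that the paper justifies only by reference to a figure.
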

\begin{proof}
The proof is similar to the proof of the previous theorem.

Take three non-collinear points $P_1$, $P_2$ and $P_3$, and the three lines $e_1$, $e_2$, and $e_3$ that they define in $\Pi _q$. 
Let $\mathcal{L}_0$ be the set of lines that are incident with $P_1,$ $P_2$ or $P_3$, and $\mathcal{P}_0$ be the set of points that are incident with $e_1,$ $e_2$ or $e_3$. Obviously, $(\mathcal{P}_0,\mathcal{L}_0)$ is a 
$3$-good structure in $\Pi _q$. Hence, by Theorem \ref{del-t-good}, after deleting the
points and lines of $\mathcal{P}_0$ and 
$\mathcal{L}_0$ from $\Pi _q$, the incidence graph $G$ of the resulting structure is a 
$(q-2,g)$-graph with $g\geq 6$. We deleted $3q$ lines and $3q$ points of $\Pi _q,$ so the order of $G$ is 
$$2(q^2+q+1-3q)=2(q^2-2q+1)=2(q-1)^2.$$

Now, we show that every edge is contained in exactly $\lambda =(q-3)(q^2-9q+21)$ distinct $6$-cycles. Take any edge $u_1v_1$ and count the number of $u_1v_1u_2v_2u_3v_3$ $6-$cycles. Without loss of generality, we may assume that $u_1$ is a point and $v_1$ is a line of $\Pi _q$. We have $(q-3)$ possible choices for the point $u_2$. Again, the choice of $u_3$ uniquely determines the lines $v_2$ and $v_3.$

\begin{figure}[h]
\begin{center}
\includegraphics[scale=0.7]{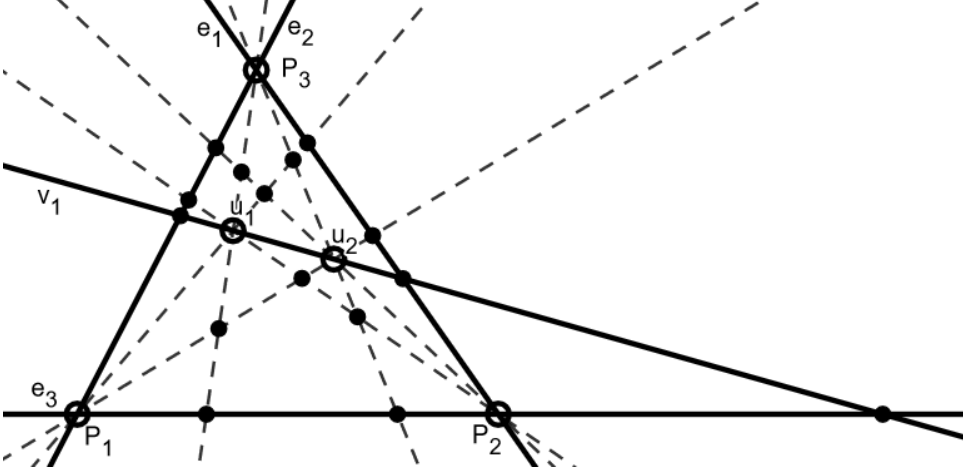}
\caption{Deleted points, Theorem 2.5.}
\label{fig-3}
\end{center}
\end{figure}

A $6-$cycle through the edge $u_1v_1$ contains a deleted element if and only if $u_3$ is incident with one of the following lines: $e_1$, $e_2$, $e_3,$ $u_1P_1$, $u_1P_2$, $u_1P_3$, $u_2P_1$, $u_2P_2$ and $u_2P_3$. In addition, $u_3$ cannot be incident with the line $v_1$ because then $u_1v_1u_2v_2u_3v_3$ would not be a 6-cycle. 
The union of these nine lines contains $10q-20$ points of $\Pi _q$
(see Figure \ref{fig-3}), therefore the number of possible choices for the point $u_3$ is
$$(q^2+q+1)-(10q-20)=q^2-9q+21.$$

So the number of distinct $6-$cycles through any edge is $\lambda .$
The assumption $q>4$ implies $\lambda >0,$ hence the girth of $G$ is $6.$ Thus $G$ satisfies all conditions of the theorem.
\end{proof}

By Theorem \ref{lowerbound}, we have that
$$
\begin{aligned}
    n(q-1,6,(q-2)&(q-3)^2)\\
    &\ge n_0(q-1,6)+\left\lceil 2\frac{(q-2)^3-(q-2)(q-3)^2}{q-1}\right\rceil \\
    &=2(q^2-3q+3)+4q-14+\left\lceil \frac{6}{q-1}\right\rceil  \geq 2q^2-2q-7,
\end{aligned}
$$
and
$$
\begin{aligned}
    n(q-2,6,(q-3)&(q^2-9q+21)\\
    &\ge n_0(q-2,6)+\left\lceil 2\frac{(q-3)^3-(q-3)(q^2-9q+21}{q-1}\right\rceil \\
    &=2(q^2-5q+7)+6q-30+\left\lceil \frac{12}{q-2}\right\rceil  \geq 2q^2-4q-15.
\end{aligned}
$$
So the difference between the lower bound and the order of our example is small.
It is at most $7$ in the case of Theorem \ref{t=2}, and it is at most $17$
in the case of Theorem \ref{t=3}.  

All the known finite projective planes have prime power order and if $q$ is any prime power, then the plane $\mathrm{PG}(2,q)$ has order $q.$ So, according to our current knowledge, the graphs considered in the previous three examples exist if and only if
$q$ is a prime power.

The next example of egr-graphs is based on the properties of the Hermitian curve. 
This is the curve $\mathcal{H}$ with equation 
$X_0^{q+1}+X_1^{q+1}+X_2^{q+1}=0$ in the desarguesian finite projective 
plane $\mathrm{PG}(2,q^2).$ This curve has $q^3+1$ points, at each of its points 
there is a  unique tangent line to $\mathcal{H}$, and all other lines of the plane are
$(q+1)$-secants of $\mathcal{H}$. The points and the tangent lines of $\mathcal{H}$ form a $(q+1)$-good structure in $\mathrm{PG}(2,q^2).$

\begin{theorem}
Let $q$ be a prime power. Then there exists a  bipartite 
$$egr\left(2(q^4-q^3+q^2),q^2-q,6,(q^2-q-1)(q^4 - 3 q^3 - q^2 + 5q + 3)\right).$$
\end{theorem}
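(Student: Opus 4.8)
The plan is to mimic the three preceding proofs: delete the $(q+1)$-good structure and count $6$-cycles through a fixed edge. First I would delete the $q^3+1$ points of $\mathcal{H}$ together with its $q^3+1$ tangent lines. Since these form a $(q+1)$-good structure, Theorem~\ref{del-t-good} (applied to the generalized triangle $\mathrm{PG}(2,q^2)$ of order $(q^2,q^2)$) immediately yields that the incidence graph $G$ of the surviving points and lines is $(q^2+1-(q+1))=(q^2-q)$-regular with girth at least $6$. As $\mathrm{PG}(2,q^2)$ has $q^4+q^2+1$ points and the same number of lines, the order of $G$ is $2(q^4+q^2+1)-2(q^3+1)=2(q^4-q^3+q^2)$, as required.

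Next I would fix an edge $u_1v_1$ with $u_1$ a point and $v_1$ a line, and count the $6$-cycles $u_1v_1u_2v_2u_3v_3$. Since $v_1$ survives it is a $(q+1)$-secant, so it carries $q^2+1-(q+1)=q^2-q$ surviving points; removing $u_1$ leaves $q^2-q-1$ choices for $u_2$, the first factor of $\lambda$. As before, once $u_2$ is fixed the choice of $u_3$ determines $v_2=u_2u_3$ and $v_3=u_1u_3$, so it remains to count the admissible $u_3$: these are the surviving points off $v_1$ for which both lines $u_1u_3$ and $u_2u_3$ are secants, since otherwise $v_2$ or $v_3$ would be a deleted tangent. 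I would carry out this count by complementation inside the set of surviving points not on $v_1$, of which there are $(q^4-q^3+q^2)-(q^2-q)=q^4-q^3+q$. Writing $X_i$ for the surviving points $u_3\notin v_1$ with $u_iu_3$ tangent, each of the $q+1$ tangents through $u_i$ meets $v_1$ only in $u_i$ and meets $\mathcal{H}$ only in its point of tangency, so it contributes $q^2-1$ surviving points off $v_1$; as distinct tangents through $u_i$ meet only in $u_i$, this gives $|X_1|=|X_2|=(q+1)(q^2-1)$, and the admissible count is $(q^4-q^3+q)-|X_1|-|X_2|+|X_1\cap X_2|$.

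The key step — and the only one requiring a genuinely geometric argument — is evaluating $|X_1\cap X_2|$, the number of points $u_3$ for which both $u_1u_3$ and $u_2u_3$ are tangents. The crucial observation is that no tangent line passes through both $u_1$ and $u_2$, because the only line through both is $v_1$, which is a secant. Consequently the map sending a pair $(\ell_1,\ell_2)$ of tangents, $\ell_1\ni u_1$ and $\ell_2\ni u_2$, to the point $\ell_1\cap\ell_2$ is a bijection onto $X_1\cap X_2$: its inverse recovers $\ell_1=u_1u_3$ and $\ell_2=u_2u_3$, and one checks that $\ell_1\cap\ell_2$ always avoids both $\mathcal{H}$ (a common point would be a common point of tangency, forcing $\ell_1=\ell_2=v_1$) and $v_1$ (else $u_1\in\ell_2$ or $u_2\in\ell_1$, again forcing a tangent equal to $v_1$). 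Hence $|X_1\cap X_2|=(q+1)^2$.

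Substituting the three quantities, the admissible count becomes $(q^4-q^3+q)-2(q+1)(q^2-1)+(q+1)^2=q^4-3q^3-q^2+5q+3$. Multiplying by the $q^2-q-1$ choices for $u_2$ shows that every edge lies on precisely $\lambda=(q^2-q-1)(q^4-3q^3-q^2+5q+3)$ girth cycles, independent of the edge, so $G$ is the claimed egr-graph; since $\lambda>0$ for every prime power $q$, its girth is exactly $6$.
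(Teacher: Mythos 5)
Your proposal is correct, and it follows the paper's construction exactly: delete the points and tangent lines of the Hermitian curve $\mathcal{H}$, obtain $(q^2-q)$-regularity, the order $2(q^4-q^3+q^2)$, and girth at least $6$ from the $(q+1)$-good structure via Theorem~\ref{del-t-good}, and then count the $6$-cycles through a fixed edge $u_1v_1$. Where you genuinely diverge is in how that count is organized. The paper fixes the line $v_3$ through $u_1$ (that choice supplies the factor $q^2-q-1$) and counts pairs $(u_2,u_3)$: from the $(q^2-q-1)^2$ candidate pairs it subtracts the bad ones, which are in bijection with the $q^3+1-3(q+1)$ tangent lines that do not pass through $u_1$ and whose tangency points lie off $v_1$ and $v_3$; this one-step subtraction tacitly requires the pairwise disjointness of the three excluded families of $q+1$ tangents, and the paper also invokes the collinearity of the feet of the tangents through $u_1$. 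You instead take the factor $q^2-q-1$ from the choices of $u_2$ and count the admissible $u_3$ by inclusion--exclusion among the $q^4-q^3+q$ surviving points off $v_1$, computing $|X_1|=|X_2|=(q+1)(q^2-1)$ and evaluating the intersection term by your bijection between $X_1\cap X_2$ and the $(q+1)^2$ pairs of tangents through $u_1$ and $u_2$ --- which is sound, since no tangent passes through both points (their joining line is the secant $v_1$), and you correctly check that the intersection point of two such tangents avoids both $\mathcal{H}$ and $v_1$. The two bookkeepings are dual and yield the same polynomial $q^4-3q^3-q^2+5q+3$. Your route is marginally more elementary and more fully justified: it uses only that every line is a tangent or a $(q+1)$-secant and that each surviving point lies on $q+1$ tangents, it dispenses with the collinearity of the feet, and it makes explicit the nondegeneracy checks and the fact $\lambda>0$ for every prime power $q$ (so the girth is exactly $6$), points the paper leaves implicit; what the paper's version buys is brevity, replacing your intersection lemma and inclusion--exclusion with a single global count of tangent lines.
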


\begin{proof}
Let $\mathcal{H}$ be a Hermitian curve in $\mathrm{PG}(2,q^2).$
Delete the points and the tangent lines of $\mathcal{H}$ from $PG(2,q^2)$. 
Let $G$ denote the incident graph of the remaining points and lines. 
The points and the tangent lines of $\mathcal{H}$ form a $(q+1)$-good
structure, hence, by Theorem \ref{del-t-good}, $G$ is $(q^2-q)$-regular and its girth 
is at least $6.$ The order of $G$ is 
$$2(q^4+q^2+1)-2(q^3+1)=2(q^4-q^3+q^2).$$

Now, we count the number of $6$-cycles $u_1v_1u_2v_2u_3v_3$ through an edge $u_1v_1$. Without loss of generality, we may assume that $u_1$ is a point and $v_1$ is a line. We have $q^2-q-1$ possible choices for the line $v_3$, and $(q^2-q-1)^2$ possible choices for the pair $(u_2,u_3)$. However, the line $v_2$ may be a tangent line of $\mathcal{H}$. The tangent lines through the point $u_1$ intersect $\mathcal{H}$
in exactly $(q+1)$ collinear points and the lines $v_1$ and $v_3$ also intersect 
$\mathcal{H}$ in $(q+1)$ points. All points of $\mathcal{H}$ have the property that  the unique tangent lines at those points intersect the line $v_1$ and $v_3$ not in 
$\mathcal{H}$ and these  points of intersection are different from the point $u_1$. 
Therefore the exact number of possible choices for the pair $(u_2,u_3)$ is
$$(q^2-q-1)^2-(q^3+1-3(q+1))=q^4 - 3 q^3 - q^2 + 5q + 3.$$
Hence the number of $6$-cycles through the edge $u_1v_1$ is
$$(q^2-q-1)(q^4 - 3 q^3 - q^2 + 5q + 3).$$

So $G$ satisfies all conditions of the theorem.
\end{proof}

\begin{remark}
{\rm Theorem \ref{lowerbound} gives
$$n(q^2-q,6,(q^2-q-1)(q^4 - 3 q^3 - q^2 + 5q + 3))\ge 2(q^4-q^3-3q-1).$$
so the order of $G$ is close to the bound.}
\end{remark}

\begin{theorem}
Let $G$ be the point-hyperplane incidence graph of $\mathrm{PG}(n,q),$ the $n$-dimensional finite projective space of order $q$, where $n\ge3$. Then $G$ is a bipartite 
$$egr\left(2\frac{q^{n+1}-1}{q-1},\frac{q^n-1}{q-1},4,
\frac{q^{2n-1}-q^{n+1}-q^{n}+q^2}{(q-1)^2}\right).$$

If $n=3,$ then $G$ is an extremal bipartite egr-graph.
\end{theorem}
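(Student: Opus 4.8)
The plan is to read off the four egr-parameters from the incidence combinatorics of $\mathrm{PG}(n,q)$ and then, for $n=3$, to match the order of $G$ against a double-counting lower bound. I would begin by recording the standard incidence numbers: both points and hyperplanes number $\frac{q^{n+1}-1}{q-1}$, each point lies on $\frac{q^n-1}{q-1}$ hyperplanes and each hyperplane carries equally many points, so $G$ is bipartite and $\frac{q^n-1}{q-1}$-regular of the stated order. As $G$ is bipartite its girth is even and at least $4$; since two distinct hyperplanes meet in an $(n-2)$-space containing $\mu:=\frac{q^{n-1}-1}{q-1}\ge q+1\ge2$ points for $n\ge3$, a $4$-cycle exists and the girth is exactly $4$. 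The fact driving everything is that \emph{any two distinct points lie on exactly $\mu$ common hyperplanes} (those through the line they span), and dually any two hyperplanes meet in exactly $\mu$ points.

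To pin down $\lambda$, I would count the $4$-cycles through a fixed edge $PH$ with $P\in H$. Such a cycle $P,H,P',H',P$ is specified by a point $P'\ne P$ on $H$ together with a hyperplane $H'\ne H$ through both $P$ and $P'$: there are $\frac{q^n-1}{q-1}-1$ choices of $P'$, and for each the hyperplanes through the line $PP'$ number $\mu$, minus $H$ itself, giving $\mu-1$ choices of $H'$. Hence every edge lies on exactly
$$\lambda=\Bigl(\tfrac{q^n-1}{q-1}-1\Bigr)\Bigl(\tfrac{q^{n-1}-1}{q-1}-1\Bigr)=\frac{(q^n-q)(q^{n-1}-q)}{(q-1)^2}=\frac{q^{2n-1}-q^{n+1}-q^{n}+q^2}{(q-1)^2}$$
cycles, a number independent of the edge, so $G$ is an egr-graph with the asserted parameters.

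For extremality at $n=3$ I would first establish a general lower bound for any bipartite $egr(2m,k,4,\lambda)$. Writing $d(a,a')=|N(a)\cap N(a')|$ for a fixed colour class $A$ and a fixed $a\in A$, counting length-$2$ paths out of $a$ gives $\sum_{a'\ne a}d(a,a')=k(k-1)$, while summing "the number of $4$-cycles through $ab$'' over the $k$ edges at $a$ reorganises into $\sum_{a'\ne a}d(a,a')\bigl(d(a,a')-1\bigr)=k\lambda$, whence $\sum_{a'\ne a}d(a,a')^2=k(\lambda+k-1)$. Cauchy--Schwarz over these $m-1$ terms then forces
$$m\ge1+\frac{\bigl(k(k-1)\bigr)^2}{k(\lambda+k-1)}=1+\frac{k(k-1)^2}{\lambda+k-1}.$$
For $n=3$ one has $k=q^2+q+1$ and $\lambda=q^3+q^2$, so $\lambda+k-1=q(q+1)^2$ and the right-hand side equals $q^3+q^2+q+1$, exactly the number of points of $\mathrm{PG}(3,q)$; equality in Cauchy--Schwarz holds precisely because in $G$ every $d(P,P')=\mu=q+1$. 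Thus $G$ attains the bound and is extremal. (Alternatively one may invoke Theorem \ref{lowerbound_pi} with $g=4$, where $c(2,k)=k$ and $c(4,k)=2k^2-k$ make its bound collapse to $2m$.)

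I expect the only delicate step to be the exact identity $\sum_{a'\ne a}d(a,a')^2=k(\lambda+k-1)$: for a general egr-graph, rather than just for $G$, one must verify that each $4$-cycle through an edge at $a$ is counted exactly once in the reorganised sum and that degenerate configurations are correctly handled. The verification of the incidence numbers and the algebraic simplification of $\lambda$ are routine.
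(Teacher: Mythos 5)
Your proposal is correct, and while the computation of $\lambda$ coincides exactly with the paper's (fix an incident pair, count $\frac{q^n-1}{q-1}-1$ choices for the second point and $\frac{q^{n-1}-1}{q-1}-1$ choices for the second hyperplane through the spanned line), your treatment of extremality for $n=3$ takes a genuinely different route. The paper simply invokes the eigenvalue-based Theorem \ref{lowerbound_pi} to get $n_2\ge 2(q^3+q^2+q+1)$; you instead prove the needed bound from scratch for any bipartite $egr(2m,k,4,\lambda)$ by double counting. Your two identities are sound: $\sum_{a'\ne a}d(a,a')=k(k-1)$ counts $2$-paths, and $\sum_{a'\ne a}d(a,a')\bigl(d(a,a')-1\bigr)=k\lambda$ holds because summing $\lambda$ over the $k$ edges at $a$ counts each $4$-cycle through $a$ twice, while $\sum_{a'}2\binom{d(a,a')}{2}$ does the same (the step you flagged as delicate is fine: in a simple bipartite graph a $4$-cycle through $ab$ is uniquely a pair $(a',b')$ with $a'\in N(b)\setminus\{a\}$ and $b'\in\bigl(N(a)\cap N(a')\bigr)\setminus\{b\}$, so no degenerate configurations arise). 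Cauchy--Schwarz then yields $m\ge 1+\frac{k(k-1)^2}{\lambda+k-1}$, and one can check this is precisely what Theorem \ref{lowerbound_pi} gives for $g=4$ upon substituting $c(2,k)=k$ and $c(4,k)=2k^2-k$, so your parenthetical remark is accurate; with $k=q^2+q+1$, $\lambda=q^3+q^2$ (note $\lambda+k-1=q(q+1)^2$) the bound is $q^3+q^2+q+1$, attained by $G$ since every $d(P,P')=q+1$. What your approach buys is a self-contained, purely combinatorial proof of extremality that avoids the spectral machinery behind Theorem \ref{lowerbound_pi} and characterizes equality (all pairwise codegrees equal); it also makes transparent the correct parameters $k=q^2+q+1$, which the paper's citation obscures with an apparent typo ($n_2(q+1,4,q^3+q^2)$ where the valency should be $q^2+q+1$). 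The cost is a page of counting where the paper needs one line, and your bound as derived is specific to girth $4$, whereas the cited theorem covers all even girths.
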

\begin{proof}
The only part we do not obtain immediately from the basic combinatorial properties of 
$\mathrm{PG}(n,q),$ is the exact value of $\lambda ,$ the number of $4$-cycles through an edge. Consider an incident point-hyperplane pair $(P_1,\mathcal{S}_1)$ and count the number of $4-$cycles $P_1S_1P_2S_2$ through this edge. For the point $P_2$ we have 
$\frac{q^{n}-1}{q-1}-1$ possible choices. There are $\frac{q^{n-1}-1}{q-1}$ hyperplanes that contain the points $P_1$ and $P_2$, one of them is $\mathcal{S}_1$, hence the number of possible choices for the hyperplane $\mathcal{S}_2$ is 
$\frac{q^{n-1}-1}{q-1}-1.$ 
Therefore the number of girth cycles through any edge is 
$$\left( \frac{q^{n}-1}{q-1}-1\right) \left( \frac{q^{n-1}-1}{q-1}-1\right) =
\frac{q^{2n-1}-q^{n+1}-q^{n}+q^2}{(q-1)^2}.$$

If $n=3$, then Theorem \ref{lowerbound_pi} gives the lower bound 
    $$n_2(q+1,4,q^3+q^2)\ge2(q^3+q^2+q+1).$$
So $G$ is an extremal bipartite egr-graph, because its order attains the bound.
\end{proof}

\begin{remark}\label{compare_4}
{\rm The graph $G$ is relatively simple, the interesting part is its extremality
for $n=3,$  because it shows the strength of Theorem \ref{lowerbound_pi} for small 
values of $\lambda .$ For comparison, Theorem \ref{lowerbound} gives a significantly weaker lower bound:
    $$n_2(q+1,4,q^3+q^2)\ge 2(2q^2+q+1).$$}
\end{remark}

\section{Almost edge-girth-regular graphs}
The definition of egr-graphs
was weakened by Poto\v cnik and Vidali \cite{pv} in the following way. 
One can introduce the signature
$(a_1,a_2,\ldots a_k)$ of a vertex as the ordered sequence of the number of girth 
cycles containing the edges emanating from the vertex. A
graph is called \emph{girth-regular} if all of its points have the same
signature. Girth-regular graphs are regular graphs. If the signature of a 
girth-regular graph $G$ satisfies $a_1=a_2=\ldots =a_k$, then $G$ is an egr-graph.
 If the signature of $G$ has exactly two different entries, then we call $G$ an
\emph{almost egr-graph}, shortly an \emph{agr-graph}. When an agr-graph $G$ of order $v$ has girth $g$, the two entries of its signature are $a$ and $b$ which appear $k_1$ and
$k_2$ times, respectively, then we say that $G$ is an
$$\mathrm{agr}\left( v,k_1+k_2,g,\left[a_{(k_1)},b_{(k_2)}\right]\right) .$$

We can construct signature-regular graphs of girth $5$ from the incidence graphs of 
biaffine planes. 
\begin{definition}
Let $\Pi$ be a finite projective plane of order $q$. A biaffine plane is obtained from $\Pi$ by choosing a point-line pair $(P,\ell )$ and deleting $P$, $\ell$, all the lines incident with $P$ and all the points belonging to $\ell$. If the point-line pair is incident in $\Pi$, then we call the biaffine plane type 1, otherwise, type 2.
\end{definition}

Starting from the incidence graph of a biaffine plane, we just need to add some new edges such that the new graph has girth $5,$ and all new edges are contained in the same number of distinct girth cycles. 
In this section we always consider $\mathrm{PG}(2,q)$ as 
$\mathrm{AG}(2,q) \cup \ell_{\infty}.$  We delete the line at infinity and a pencil of lines of $\mathrm{AG}(2,q)$. The obtained biaffine plane is denoted by $\mathcal{B}$, and we coordinatize $\mathcal{B}$ by Cartesian coordinates in the usual way. The incidence graph of $\mathcal{B}$ is denoted by $G.$

First, we present a general construction method.

\begin{lemma}
\label{gen-constr}
Let $p>3$ be a prime and $q=p^r.$ Then there exists a 
$(q+2,5)$-graph of order $2q^2.$
\end{lemma}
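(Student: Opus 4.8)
The plan is to realize the desired graph $G'$ as the incidence graph $G$ of $\mathcal{B}$ together with two carefully chosen $2$-regular graphs placed inside the two vertex classes. First I would fix the deleted pencil to be the vertical one, so that the points of $\mathcal{B}$ are the pairs $(x,y)\in\mathbb{F}_q^2$, its lines are the pairs $[m,b]\in\mathbb{F}_q^2$ (meaning $y=mx+b$), and $(x,y)$ is incident with $[m,b]$ exactly when $y=mx+b$. Then $G$ is bipartite and $q$-regular on $2q^2$ vertices; being the incidence graph of a substructure of $\mathrm{PG}(2,q)$, it has girth $6$, since two points lie on at most one common line and two lines meet in at most one point.

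To raise the degree to $q+2$, I would add, for fixed nonzero $s,t\in\mathbb{F}_q$, the point-point edges joining $(x,y)$ to $(x,y\pm s)$ and the line-line edges joining $[m,b]$ to $[m,b\pm t]$. Each point and each line gains exactly two new neighbours, distinct from one another because $p\neq2$ and automatically distinct from the old (cross-class) neighbours, so $G'$ is $(q+2)$-regular on $2q^2$ vertices. Inside each column $\{x\}\times\mathbb{F}_q$ and each parallel class $\{m\}\times\mathbb{F}_q$ the new edges decompose into disjoint cycles, each of length equal to the additive order of $s$ (resp. $t$), namely $p$.

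The heart of the argument is to show that $G'$ has girth $5$. I would first record the parity obstruction: the two classes $\mathcal{P}$ and $\mathcal{L}$ are independent in $G$ and every new edge stays within a class, so any closed walk uses an even number of incidence edges. Ruling out triangles and quadrilaterals then splits into a short case analysis according to the number of incidence edges. Purely-new short cycles live inside a single column or class, hence inside one $p$-cycle, so $p>3$ forbids them; configurations mixing a single new edge with two incidences collapse because two points of one column on a common line, or two lines of one parallel class through a common point, must coincide; and the all-incidence $4$-cycle is excluded by the (bi)affine axioms.

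The one genuinely dangerous configuration, and the main obstacle, is the quadrilateral $(x,y)-(x,y')-[m,b]-[m,b']-(x,y)$ built from one new point-edge, one new line-edge, and two incidences: the two incidences force $y'-y=b'-b$, while the two new edges force $y'-y\in\{s,-s\}$ and $b'-b\in\{t,-t\}$, so such a cycle exists precisely when $s=\pm t$. This is exactly why I would insist on $s\neq\pm t$, which is possible because $p>3$ makes $|\mathbb{F}_q^{*}|\geq4>|\{t,-t\}|$; together with $p\geq5$ this is where the hypothesis $p>3$ is used. Having excluded all $3$- and $4$-cycles, it remains only to exhibit one $5$-cycle, for which a cycle of the form $(0,0)-[m,0]-(x_2,mx_2)-[m-s/x_2,\,s]-(0,s)-(0,0)$ with $x_2\neq0$ (four incidences and one new point-edge) serves, certifying girth exactly $5$.
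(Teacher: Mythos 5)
Your proof is correct and takes essentially the same route as the paper: the same type-1 biaffine plane with shift edges $(x,y)\sim(x,y\pm s)$ and $[m,b]\sim[m,b\pm t]$ subject to $t\neq\pm s$ (the paper fixes $s=1$, $t=\varepsilon\notin\{0,\pm1\}$), the same regularity count, and the same identification of the mixed point-shift/line-shift quadrilateral as the decisive case, with the only real difference being that you certify girth exactly $5$ via an explicit $5$-cycle while the paper instead observes $2q^2<n_0(q+2,6)$, the Moore bound for girth $6$. One small wording fix: a quadrilateral can also consist of two new edges in one class plus two incidences (three points and one line, or dually), a configuration your phrase ``a single new edge with two incidences'' does not literally enumerate, although your stated reason --- two distinct points of one column cannot lie on a common line of $\mathcal{B}$ --- disposes of it verbatim.
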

\begin{proof}
Delete the vertical lines of $\mathrm{AG}(2,q)$. Then $\mathcal{B}$ is a
biaffine plane of type $1$.  Let 
$\varepsilon\in GF(q)\setminus \{ 0,\pm1\} .$ We define new 
edges among the points and lines of $\mathcal{B}$ in the following way. 
The two neighbors of the point $(x,y)$ are the points $(x,y\pm1)$ on the same vertical line, and the two neighbors of the line 
$Y=mX+b$ are the lines $Y=mX+b\pm\varepsilon $ from the same parallel class.
As the characteristic of $\mathrm{GF}(q)$ is $p,$ the new edges form $p$-cycles. 
Let $\Gamma $ denote the graph $G$ extended by the new edges.
By definition, $\Gamma $ has $2q^2$ vertices,  $q^2$ points and $q^2$ lines. Each vertex has $2$ neighbors of its type and $q$ neighbors of the other type, so $\Gamma $
is $(q+2)$-regular.

We claim that $\Gamma $ has girth $5.$ 
Suppose that 
$\Gamma $ contains a cycle $\mathcal{C}$ of length $\ell \leq 4$. Then 
$\mathcal{C}$ must contain a new edge. A new edge joins either two points on a vertical line of $\mathrm{AG}(2,q)$, or two parallel lines of $\mathrm{AG}(2,q)$. These pairs of vertices have no common neighbors in $G,$ and $p>3,$ so $\Gamma $ is triangle-free. If 
$\mathcal{C}$ is a four-cycle $u_1u_2v_2v_1$ where $u_1u_2$ is a new edge, 
then there are three possibilities. First, if all four vertices of $\mathcal{C}$ were the same type, then $p=4,$ a contradiction. If three 
vertices were the same type, then without loss of generality we may assume, that
$v_2$ is the only vertex of the other type. Then $u_1$ and $v_1$ were the two
same-type neighbors of $u_2,$ so they would be collinear points or parallel 
lines of $\mathrm{AG}(2,q)$, hence they could not have a common neighbor in the 
incidence graph of $\mathcal{B}$. Hence $v_2v_1$ is also a new edge and the types of $u_1$ and $v_1$ cannot be the same. We may assume without loss of generality that $u_1=(c,d)$ and $u_2=(c,d+1)$ are points, $v_1$ and $v_2$ are lines having equations $Y=mX+b$ and
$Y=mX+b\pm\varepsilon $, respectively. As the point $u_i$ is on the line $v_i,$ we get
$d=mc+b$ for $i=1,$ and $d+1=mc+b\pm\varepsilon $ for $i=2$. This is a contradiction because $\varepsilon \neq \pm1.$ Thus the girth of $\Gamma $ is at least $5.$ The order of $\Gamma $ is less than $n_0(q+2,6),$ so its girth is 5.
\end{proof}

\begin{corollary}
\label{HS+40}
The Hoffman-Singleton graph and the $(6,5)$-cage of order $40$ have a simple, geometric
construction. 
\end{corollary}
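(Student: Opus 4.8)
The plan is to obtain both graphs by specializing, and then lightly modifying, the construction of Lemma \ref{gen-constr} at $q=5$. Since $5$ is a prime larger than $3$, Lemma \ref{gen-constr} directly produces a $(7,5)$-graph $\Gamma$ of order $2\cdot 5^2=50$. The Moore bound evaluates to $n_0(7,5)=\frac{7\cdot 6^2-2}{5}=50$, so $\Gamma$ attains the Moore bound and is a Moore cage; as the Moore graph of valency $7$ and girth $5$ is unique, $\Gamma$ must be the Hoffman--Singleton graph. This settles the first assertion with essentially no work beyond evaluating the bound.

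For the $(6,5)$-cage I would start again from the graph $\Gamma$ of the previous paragraph, whose vertices are the $25$ points $(x,y)$ and the $25$ lines $Y=mX+b$ of $\mathcal{B}$ (with $x,y,m,b\in\mathrm{GF}(5)$), and delete two carefully matched classes of vertices: one point-column $C=\{(c,y):y\in\mathrm{GF}(5)\}$ and one parallel class of lines $P=\{Y=m_0X+b:b\in\mathrm{GF}(5)\}$. I would then check the local effect vertex by vertex. A surviving point $(x,y)$ with $x\neq c$ keeps both of its point-edge neighbours $(x,y\pm1)$, which lie in its own column, and loses exactly the one line through it of slope $m_0$; dually, a surviving line of slope $m\neq m_0$ keeps both of its same-slope line-edge neighbours and loses exactly the one incident point lying in column $c$. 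Hence every surviving vertex drops from degree $7$ to degree $6$, so the resulting graph $\Gamma'$ is $6$-regular of order $50-10=40$.

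It then remains to pin down the girth and connectivity of $\Gamma'$. Being an induced subgraph of the girth-$5$ graph $\Gamma$, it contains no cycle shorter than $5$, so its girth is at least $5$; on the other hand a $6$-regular graph of girth at least $6$ would need at least $n_0(6,6)=\frac{2\cdot 5^3-2}{4}=62$ vertices, which exceeds $40$, forcing the girth to be exactly $5$. Connectivity follows from the surviving incidence structure together with the surviving point- and line-$5$-cycles: any two surviving points lie on a common surviving line unless the connecting slope is the forbidden $m_0$, in which case I would first move within a column through the point-edges and then use a line of an admissible slope. Since $40$ is the known order of the $(6,5)$-cage, $\Gamma'$ is then an extremal $(6,5)$-graph.

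The degree bookkeeping is routine; the real weight is carried by two appeals to facts outside the excerpt, namely the uniqueness of the Hoffman--Singleton graph as the Moore graph of valency $7$ and girth $5$, and the known value $40$ for the cage order in the $(6,5)$ case. The one point I would treat most carefully is that the two deleted classes must be correctly matched, a point-column for the point-edges and a line-parallel-class for the line-edges, so that deleting whole new-edge $5$-cycles leaves every remaining new edge intact while removing exactly one incidence from each surviving vertex; deleting instead two parallel classes of lines (or two columns of points) would destroy regularity and is the natural trap to avoid.
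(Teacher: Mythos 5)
Your proposal is correct and follows essentially the same route as the paper: specialize Lemma \ref{gen-constr} at $q=5$ to get a $(7,5)$-graph of order $50$ meeting the Moore bound (hence Hoffman--Singleton), then delete one column of points together with one parallel class of lines --- the paper takes the points of $X=0$ and the horizontal lines, viewing them as a $1$-good structure in $\mathcal{B}$ --- to obtain a $6$-regular girth-$5$ graph on $40$ vertices. The only cosmetic differences are at the final identification, where the paper cites Wong's uniqueness theorem while you appeal to the known cage order $40$, and that your extra verifications (girth exactly $5$ via $n_0(6,6)=62>40$, connectivity, and the matching of deleted classes to the new-edge cycles) are sound details the paper leaves implicit.
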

\begin{proof}
Let $q=5$ and $\varepsilon =2.$ Then the graph $\Gamma $ constructed in Lemma\ref{gen-constr} is a $(7,5)$-graph of order $50,$ so it is the Hoffman-Singleton graph,
which is a Moore cage.

The five points on the vertical line $X=0$ and the five horizontal lines of 
$\mathrm{AG}(2,5)$ form a $1$-good structure in $\mathcal{B}$. Although  
$\mathcal{B}$ is not a generalized polygon, it is clear that Theorem \ref{del-t-good}
holds in this case as well. Deleting these points and lines from $\Gamma ,$ the resulting graph is a $(6,5)$-graph of order $40.$  The uniqueness of this graph was proven by Wong \cite{wo}, so it is the $(6,5)$-cage. 
\end{proof}

\begin{remark}
{\rm  A computer-assisted calculation shows that the $(6,5)$-cage is an 
egr$(40,6,5,22).$ Its order attains the bound of Theorem \ref{lowerbound}, so it is an
extremal egr-graph. We can also prove this fact using purely combinatorial arguments. 
We omit the proof, because it is straightforward, but long counting.}
\end{remark}

\begin{theorem}
\label{korok1}
Let $p>5$ be a prime and $q=p^r \ge 11$ be a prime power. Then there exists an 
$$\mathrm{agr}\left(2q^2,q+2,5,\left[8(q-1)_{(q)},(q^2-q)_{(2)}\right]\right).$$
\end{theorem}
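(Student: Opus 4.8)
The plan is to work with the graph $\Gamma$ of Lemma \ref{gen-constr}, to choose the parameter $\varepsilon$ slightly more carefully than there, and then to count the $5$-cycles through each of the two kinds of edges of $\Gamma$: the \emph{incidence edges} inherited from $G$ and the \emph{new edges} added inside the two colour classes. The affine translations $(x,y)\mapsto(x+s,y+t)$ are automorphisms of $\Gamma$ (they permute the new point-edges and, via the induced action $[m,b]\mapsto[m,b+t-ms]$, the new line-edges), and they act transitively on points and on lines. Hence all points share one signature and all lines share another, so it suffices to count the $5$-cycles through one fixed incidence edge and one fixed new edge at a point and then to check that a line yields the same two numbers; these numbers, $8(q-1)$ for the $q$ incidence edges and $q^2-q$ for the $2$ new edges, will be the entries of the signature.

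The first step is a parity argument classifying all $5$-cycles. Colouring points $P$ and lines $L$, an incidence edge changes colour while a new edge preserves it, so a $5$-cycle contains an even number of incidence edges, i.e. $0$, $2$ or $4$. A cycle with $0$ incidence edges lies in one colour class, where the new edges form disjoint $p$-cycles; as $p>5$ this is impossible. Reading the cyclic colour word as monochromatic blocks, the $2$-incidence-edge cycles have block pattern $(1,4)$ or $(2,3)$. The $(1,4)$ pattern forces a point to lie on two parallel lines, or a line to pass through two points of a common vertical line, which is impossible in $\mathcal{B}$. Substituting the incidence relation $Y=mX+b$ into the remaining patterns shows that the $(2,3)$ patterns $PPLLL$ and $LLPPP$ survive only under the coincidences $2\varepsilon=\pm1$ and $\varepsilon=\pm2$, respectively. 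Choosing
$$\varepsilon\in\mathrm{GF}(q)\setminus\{0,\pm1,\pm2,\pm2^{-1}\},$$
which is possible exactly because $p>5$ makes these seven values distinct and $q\ge 11>7$ provides an eighth field element, eliminates every $2$-incidence-edge cycle. Thus every $5$-cycle of $\Gamma$ has four incidence edges and one new edge, with colour pattern $PPLPL$ (a point-point new edge, three points, two lines) or its mirror $LLPLP$.

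The second step is the enumeration. A $PPLPL$-cycle is determined by two new-adjacent points $p_1,p_2$, an ``apex'' point $p_4$ off their common vertical line, and the two lines $p_2p_4,\ p_1p_4$; hence a fixed point-point new edge lies in exactly $q^2-q$ such cycles, one per admissible apex (the $q$ forbidden apexes being the points on the vertical line through $p_1$). Since the counting uses only that each vertex has two same-type neighbours and never the size of the new-edge steps, the same number arises for a line-line new edge, giving the entry $(q^2-q)_{(2)}$. For a fixed incidence edge $P\ell$ I would split the count by the intrinsic role of $P$: either $P$ is the apex $p_4$ (incident with both cycle-lines) or a base point (incident with one). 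Each role contributes $2(q-1)$ cycles of type $PPLPL$, and by the same argument another $2(q-1)+2(q-1)$ of type $LLPLP$, for a total of $8(q-1)$, giving the entry $8(q-1)_{(q)}$.

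The main obstacle, and the step needing the most care, is over-counting in the incidence-edge enumeration: a $PPLPL$-cycle has a reflection symmetry swapping its two base points and its two lines, so enumerating separately by ``$P=p_1$'' versus ``$P=p_2$'', or by which of $p_1p_4,p_2p_4$ is named $\ell$, counts each cycle twice. Organising the count by the intrinsic position of $P$ (apex versus base) rather than by a chosen labelling is what fixes each role-count at precisely $2(q-1)$. I would close with the global consistency check: every $5$-cycle carries one new edge and four incidence edges, so (number of new edges)$\times(q^2-q)$ must equal (number of incidence edges)$\times 8(q-1)/4$; both totals agree, confirming the two signature values simultaneously. Finally, $8(q-1)\ne q^2-q$ because $q^2-9q+8=(q-1)(q-8)>0$ for $q\ge 11$, so the signature has exactly two distinct entries and $\Gamma$ is a genuine $\mathrm{agr}\!\left(2q^2,q+2,5,\left[8(q-1)_{(q)},(q^2-q)_{(2)}\right]\right)$.
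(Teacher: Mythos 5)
Your proposal is correct and follows essentially the same route as the paper: the same choice of $\varepsilon\notin\{0,\pm1,\pm2,\pm2^{-1}\}$, the same exclusion of $5$-cycles with three consecutive same-type vertices (your colour-block parity argument merely reorganizes the paper's case analysis, and your coincidence conditions $2\varepsilon=\pm1$, $\varepsilon=\pm2$ match its computations), and the same counts $8(q-1)$ and $q^2-q$, with your apex/base roles corresponding exactly to the paper's four cycle types. One small slip worth fixing: translations $(x,y)\mapsto(x+s,y+t)$ are \emph{not} transitive on lines (they preserve slopes), so either adjoin the shears $(x,y)\mapsto(x,y+cx)$ or, as the paper implicitly does, observe that your enumerations never use the position of the fixed edge; this does not affect the validity of the rest, and your closing double-count consistency check and the verification $8(q-1)\neq q^2-q$ for $q\ge 11$ are welcome additions the paper omits.
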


\begin{proof}
As $q \ge 11,$ we can choose $\varepsilon $ so that
$\varepsilon \notin \{ 0,1, \pm 2, \pm \frac{1}{2} \} .$ Consider the 
$(q+2,5)$-graph $\Gamma $ constructed in Lemma \ref{gen-constr}. We claim that
$\Gamma $ is an agr-graph.

First, we show that a $5$-cycle in $\Gamma $ cannot contain three consecutive vertices of the same type. The assumption $p>5$ implies that
a $5$-cycle cannot contain five vertices of the same type. If exactly four vertices were same type in a $5$-cycle $u_1u_2u_3u_4v_1,$ then $u_1$ and $u_4$ were two points on a vertical line of $\mathrm{AG}(2,q)$, or two parallel lines of 
$\mathrm{AG}(2,q).$ Thus $v_1$ would be a vertical line, or a point on the line at infinity, respectively. But these elements were deleted from the incidence graph of
$\mathrm{PG}(2,q).$ If exactly three consecutive vertices were same type in a $5$-cycle $u_1u_2u_3v_1v_2,$ then we may assume without loss of generality that $u_1=(c,d)$ and $u_3=(c,d+2)$ are points, $v_1$ and $v_2$ are lines having equations $Y=mX+b$ and $Y=mX+b\pm\varepsilon $, respectively. The same calculations as in the proof of
Lemma \ref{gen-constr} show, that these imply $\varepsilon =\pm 2,$ while the assumption that $u_1$ and $u_3$ are lines and $v_1$ and $v_2$ are points implies 
$\varepsilon =\pm \frac{1}{2}.$ 

Now, we count the number of $5$-cycles through an edge. 
First, consider an edge $u_1v_1$ where $u_1$ is a point and $v_1$ is a line. 
There are four types of $5-$cycles through the edge $u_1v_1$: $u_1v_1u_2u_3v_2$, $u_1v_1u_2v_2v_3$, $u_1v_1u_2v_2u_3$, and $u_1v_1v_2u_2v_3$, where $u_i$ denotes a point and $v_j$ denotes a line of $\mathcal{B}$. 
Consider the first type of $5-$cycles. We have $q-1$ possible choices for the point $u_2$, and two possible choices for the point $u_3$. The line $v_2$ is determined, it is the line joining the points $u_1$ and $u_3$.
In the second case, we have $q-1$ possible choices for the line $v_3$, and two possible choices for the line $v_2$ The point $u_2$ is the point of intersection of the lines $v_1$ and $v_2$. In the third type of $5$-cycles, we have two possible choices for the point $u_3$. In this case, we have $q-1$ possible choices for the point $u_2$ and the line $v_2$ is the line joining the points $u_2$ and $u_3$.
Finally, consider a $5$-cycle of the fourth type. We have $q-1$ possible choices for the line $v_3$, and two possible choices for the line $v_2$. The point $u_2$ is the point of intersection of the lines $v_2$ and $v_3$.
In summary, there are $8(q-1)$ distinct girth cycles through the edge $u_1v_1$.

Now, consider the girth cycles through a new edge joining the points $u_1$ and $u_2$. This time, we have only one type of $5$-cycles: $u_1u_2v_1u_3v_3$. 
The cycle is uniquely determined by $u_3,$ which is an arbitrary point not on the 
vertical line through $u_1$ and $u_2.$
Hence there are $q^2-q$ distinct girth cycles through the edge $u_1u_2$. Similarly, the new edge joining the lines $v_1$ and $v_2$ is contained in exactly $q^2-q$ distinct 
$5$-cycles.

As every vertex of $\Gamma$ has exactly $q$ neighbors of the same type and two 
neighbors of the other type, the signature of $\Gamma$ is 
$$
  \left[8(q-1)_{(q)},(q^2-q)_{(2)}\right].  
$$
\end{proof}

\begin{remark}
{\rm 
For $q=7$ and $\varepsilon =2$, Lemma \ref{gen-constr} results in a $(9,5)$-graph of order $98$. It has 2 more vertices than the smallest known $(9,5)$-graph, which was 
constructed by J\o rgensen \cite{jorg}.
However, in this case, there are 5-cycles containing three 
consecutive points and $5-$cycles containing three consecutive lines. Any of these 
possibilities breaks the symmetry of points and lines, so the constructed graph is not an agr-graph.}
\end{remark}

\begin{theorem}
Let $q=5^r$ be a power of $5$. Then there exists an 
$$\mathrm{agr}\left(2q^2,q+2,5,\left[8q-4_{(q)},(q+1)^2_{(2)}\right]\right).$$
\end{theorem}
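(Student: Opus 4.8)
The plan is to instantiate the construction of Lemma \ref{gen-constr} with the specific choice $\varepsilon = 2 \in \mathrm{GF}(q)$ and then carry out a complete census of the $5$-cycles of the resulting graph $\Gamma$. Since $q = 5^r$ has characteristic $5$ and $2 \notin \{0,\pm 1\}$, Lemma \ref{gen-constr} already guarantees that $\Gamma$ is a $(q+2,5)$-graph of order $2q^2$ in which every vertex has exactly $q$ neighbours of the other type and $2$ neighbours of its own type. So only the signature has to be determined, i.e.\ the number of girth cycles through each of the three kinds of edges: the incidence edges, the \emph{point-edges} joining $(x,y)$ to $(x,y\pm 1)$, and the \emph{line-edges} joining $Y=mX+b$ to $Y=mX+b\pm\varepsilon$.

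The decisive new feature compared with Theorem \ref{korok1} is that in characteristic $5$ several additional families of $5$-cycles appear. First, the point-edges on a fixed vertical line split its $q$ points into $q/5$ cycles of length $5$ (the cosets of the prime field $\mathrm{GF}(5)$ under $y\mapsto y\pm1$), and dually the line-edges split each parallel class into $5$-cycles; these \emph{monochromatic} cycles exist only because $5\cdot 1 = 5\varepsilon = 0$. Second, the choice $\varepsilon = 2$ is made precisely so that both obstructions recorded in the proof of Theorem \ref{korok1} now hold at once: since $2=+2$ we have $\varepsilon=\pm2$, and since $2^{-1}=3$ gives $-1/2=2$ we also have $\varepsilon=\pm\tfrac12$. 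Hence $\Gamma$ contains $5$-cycles with three consecutive points (pattern $PPPLL$) and, symmetrically, with three consecutive lines (pattern $LLLPP$). I would first show, exactly as in the proof of Theorem \ref{korok1}, that a $5$-cycle can contain at most three consecutive vertices of one type (four or five would force a deleted vertical line or a point at infinity); together with the generic two-consecutive cycles of Theorem \ref{korok1} this makes the list $PPPPP$, $LLLLL$, $PPPLL$, $LLLPP$ and the generic patterns exhaustive.

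It then remains to count, for a fixed edge of each kind, how many cycles of each family pass through it; by Theorem \ref{korok1} the generic contributions are already $8(q-1)$ for an incidence edge and $q^2-q$ for a point- or line-edge. For the new families I would compute directly in Cartesian coordinates. An incidence edge lies in exactly $2$ cycles of type $PPPLL$ and $2$ of type $LLLPP$, giving $8(q-1)+2+2=8q-4$. A point-edge lies in the single monochromatic cycle of its $\mathrm{GF}(5)$-coset, in $2q$ cycles of type $PPPLL$ (the two $3$-term point-runs containing the edge, each completed into one cycle for every one of the $q$ slopes), and in $q$ cycles of type $LLLPP$, for a total of $(q^2-q)+1+2q+q=(q+1)^2$. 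The line-edge count is again $(q+1)^2$ by the mirror computation: the generic term is $q^2-q$, the monochromatic line-cycle contributes $1$, and the cross terms $2q$ and $q$ are forced by the same incidences-counted-two-ways identities, the whole being symmetric precisely because $\varepsilon=2$ meets both $\varepsilon=\pm2$ and $\varepsilon=\pm\tfrac12$. Since each point vertex meets $q$ incidence edges and $2$ point-edges, and each line vertex meets $q$ incidence edges and $2$ line-edges, every vertex has signature $\bigl[(8q-4)_{(q)},\,((q+1)^2)_{(2)}\bigr]$.

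The main obstacle is the bookkeeping in the two new families, where each cycle is produced twice: reversing a $3$-term monochromatic run swaps its two end vertices and the two lines attached to them, yielding the same cycle, so the raw count must be halved. Care is also needed to confirm that the new types are disjoint from the generic ones (they are distinguished by the number of same-type edges they use: $0$ or $1$ for generic, $2$ for $PPPLL$/$LLLPP$, and $5$ for monochromatic), so that no cycle is counted twice across families. As a reassuring check, for $q=5$ these numbers specialise to $5$ monochromatic cycles of each colour, $125$ cycles of each three-run type and $1000$ generic cycles, totalling the $1260$ five-cycles of the Hoffman--Singleton graph; here $8q-4=(q+1)^2=36$, so $\Gamma$ is in fact edge-girth-regular, and the signature becomes genuinely two-valued only for $q\ge 25$.
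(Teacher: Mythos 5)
Your proposal is correct and follows essentially the same route as the paper's proof: instantiate Lemma \ref{gen-constr} with $\varepsilon=2$, note that $\varepsilon=\pm2$ and $\varepsilon=\pm\frac{1}{2}$ hold simultaneously in characteristic $5$, and count $8(q-1)+2+2=8q-4$ girth cycles through an incidence edge and $(q^2-q)+2q+q+1=(q+1)^2$ through a same-type edge, which is exactly the paper's case analysis by runs of consecutive same-type vertices, including the monochromatic $5$-cycles. The only blemish is your exhaustiveness sentence, which asserts that runs of four \emph{or five} same-type vertices would force a deleted element even though your own list and census (rightly) include the monochromatic patterns $PPPPP$ and $LLLLL$; only runs of exactly four are impossible (the paper notes $3\cdot(\pm2)\neq0$), a phrasing slip that does not affect any of your counts.
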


\begin{proof}
Let $\varepsilon =2$ and consider the $(q+2,5)$-graph $\Gamma $ constructed in Lemma \ref{gen-constr}. We claim that
$\Gamma $ is an agr-graph.

First, consider an edge $u_1v_1$, such that $u_1$ is a point and $v_1$ is a line. There are four types of $5-$cycles through $u_1v_1$ that have at most two consecutive vertices of the same type: $u_1v_1u_2u_3v_2$, $u_1v_1u_2v_2v_3$, $u_1v_1u_2v_2u_3$, and $u_1v_1v_2u_2v_3$, where $u_i$ denotes a point and $v_j$ denotes a line of $\mathcal{B}$. From the proof of the previous theorem, we know that the number of these girth cycles is $8(q-1)$. Unlike the previous theorem, we have girth cycles that contain exactly three consecutive points or lines. Assume that $u_1=(c,d)$ and the line $v_1$ has equation $Y=mX+b$. There are four distinct girth cycles through the edge $u_1v_1$ 
that contain three consecutive points or lines:

\begin{enumerate}
    \item $v_2\colon Y=mX+b+2,\quad v_3\colon Y=mX+b+4,\quad u_2=(c,d-1),$
    \item $v_2\colon Y= mx+b-2,\quad v_3\colon Y=mX+b-4,\quad u_2=(c,d+1),$
    \item $v_2\colon Y=mX+b-2,\quad u_2=(c,d-2),\quad u_3=(c,d-1),$
    \item $v_2\colon Y=mX+b+2,\quad u_2=(c,d+2),\quad u_3=(c,d+1). $
\end{enumerate}
There are no girth cycles with exactly four points or lines because $3\cdot (\pm 2)\neq 0$, hence the number of girth cycles through the edge $u_1v_1$ is exactly 
$$8(q-1)+4=8q-4.$$

The other type of edges comes from the cycles. Consider the girth cycles through the edge $u_1u_2$. Similarly to the previous theorem, the edge $u_1u_2$ is contained in exactly $q^2-q$ distinct $5$-cycles that do not contain three or more consecutive points or lines.
We have three types of $5-$cycles through the edge $u_1u_2$ that have exactly three consecutive vertices of the same type: $u_1u_2u_3v_1v_2$, $u_1u_2v_1v_2u_3$, and $u_1u_2v_1v_2v_3$. Consider the first type of these $5-$cycles. There are $q$ possible choices for the line $v_2$, and the point $u_2$ and line $v_1$ are uniquely determined by the other components of the cycle. In the second case, we have $q$ possible choices for the line $v_1$, while the rest of the cycle is uniquely determined again.
In the third type of $5$-cycles we have $q$ possible choices for the line $v_1$. In this case, the lines $v_2$ and $v_3$ are uniquely determined.
Since the characteristic of the field is 5, we also have $5-$cycles that contain only one type of vertices. Every edge joining two vertices of the same type is contained in exactly one such cycle.
In summary, the number of girth cycles containing the edge $u_1u_2$ is
$$q(q-1)+3q+1=(q+1)^2.$$

Every vertex of $\Gamma$ has exactly $q$ neighbors of the same type and two of the other type, hence the signature of $\Gamma$ is 
$$
  \left[8q-4_{(q)},(q+1)^2_{(2)}\right].  
$$
In particular, if $q=5,$ then $8q-4=(q+1)^2,$ so  $\Gamma$ is an egr-graph, as we have already seen in Corollary \ref{HS+40}.
\end{proof}

\begin{theorem}\label{korok2}
Let $q\ge11$ be a prime power. Then there exists an 
$$agr\left(2(q^2-1),q+2,5,\left[8(q-1)_{(q)},(q^2-q)_{(2)}\right]\right).$$
\end{theorem}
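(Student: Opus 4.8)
The plan is to realise this graph as the incidence graph of a biaffine plane of type $2$ enriched with extra edges, exactly as in Lemma~\ref{gen-constr} and Theorem~\ref{korok1}, but with the additive $p$-cycles replaced by multiplicative $(q-1)$-cycles so that the characteristic of $\mathrm{GF}(q)$ no longer matters (this is what should let us drop the hypothesis $p>5$). Concretely, I would fix the origin $O=(0,0)$ in $\mathrm{AG}(2,q)$ and delete $O$, the $q+1$ lines through $O$, together with $\ell_\infty$ and all points at infinity. The surviving structure is a biaffine plane of type $2$ with $q^2-1$ points (the nonzero affine points) and $q^2-1$ lines (the affine lines missing $O$), and its incidence graph $G$ is $q$-regular: every nonzero point lies on $q$ surviving lines and every surviving line carries $q$ nonzero points. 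Moreover the points fall into $q+1$ classes of size $q-1$, namely the punctured lines $\{t v:t\in\mathrm{GF}(q)^{\ast}\}$ through $O$, and dually the lines fall into $q+1$ parallel classes of size $q-1$, the surviving lines of a fixed slope parametrised by their nonzero intercept.

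Next I would pick two multipliers $\alpha,\beta$ of order $q-1$ in $\mathrm{GF}(q)^{\ast}$ and add, on each point class, the edges $tv\sim\alpha^{\pm1}tv$, and on each line class the edges joining intercepts $b\sim\beta^{\pm1}b$; each class then becomes a single $(q-1)$-cycle. This gives every vertex two further neighbours of its own type, so the extended graph $\Gamma$ is $(q+2)$-regular of order $2(q^2-1)$, as required. The two endpoints of a new edge are collinear with $O$ (points) or parallel (lines), hence share no neighbour in $G$; together with $q-1\ge 10$ this already rules out triangles, the $4$-cycles contained in $G$, and any short cycle inside a single class.

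The signature is then forced by three local computations carried out in coordinates. A $4$-cycle using one point-cycle edge and one line-cycle edge forces $\alpha\beta=1$ or $\alpha=\beta$; a $5$-cycle with three consecutive points forces $\beta=\alpha^{\pm2}$, and one with three consecutive lines forces $\alpha=\beta^{\pm2}$. Hence, once $\alpha,\beta$ are chosen with
$$\alpha\ne\beta,\qquad \alpha\beta\ne1,\qquad \beta\ne\alpha^{\pm2},\qquad \alpha\ne\beta^{\pm2},$$
the graph $\Gamma$ has girth $5$ and no girth cycle contains three consecutive vertices of the same type. With this in hand the four types of $5$-cycle through an incidence edge are counted exactly as in Theorem~\ref{korok1}, giving $8(q-1)$, while a new edge $u_1u_2$ lies only on the cycles $u_1u_2v_1u_3v_3$, which are determined by a point $u_3$ outside the class of $u_1$; there are $(q^2-1)-(q-1)=q^2-q$ of these, and dually for line-cycle edges. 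Since every vertex has $q$ incidence-neighbours and two new neighbours, the signature is $\left[8(q-1)_{(q)},(q^2-q)_{(2)}\right]$.

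The main obstacle is the last step: producing admissible multipliers. I have to show that for every prime power $q\ge 11$ the cyclic group $\mathrm{GF}(q)^{\ast}$ contains an ordered pair of generators satisfying the four displayed constraints. There are $\phi(q-1)$ generators, and once $\alpha$ is fixed the forbidden relations exclude only boundedly many values of $\beta$ (several of them, such as $\beta=\alpha^{\pm2}$ when $q$ is odd, being vacuous because $\alpha^{\pm2}$ is then not a generator), so a short divisor-counting argument should guarantee a valid pair for all $q\ge 11$. This threshold, rather than any restriction on the characteristic, is what here replaces the hypothesis $p>5$ of Theorem~\ref{korok1}, and it is the only delicate point of the proof; everything else is a transcription of the arguments already given for the type $1$ construction.
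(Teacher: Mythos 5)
Your proposal follows the paper's proof essentially step for step: the same type~$2$ biaffine plane (delete the origin, the lines through it, $\ell_\infty$ and its points), the same multiplicative $(q-1)$-cycles on the point classes and parallel classes governed by two generators of $\mathrm{GF}(q)^{*}$, the same exclusion conditions (your $\alpha\neq\beta$, $\alpha\beta\neq1$, $\beta\neq\alpha^{\pm2}$, $\alpha\neq\beta^{\pm2}$ are exactly the paper's $\varepsilon\neq\eta^{\pm1}$, $\varepsilon\neq\eta^{\pm2}$, $\eta\neq\varepsilon^{\pm2}$), and the same counts of $8(q-1)$ girth cycles through incidence edges and $q^{2}-q$ through new edges. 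The one point you treat more cautiously than the paper---the existence of admissible generators, which the paper dispatches with ``such generators exist because $q>8$''---is indeed fine along the lines you sketch: for odd $q$ the quadratic conditions are vacuous since $\alpha^{\pm2},\beta^{\pm2}$ are squares and hence non-generators, while for even $q\geq16$ one checks $\phi(q-1)\geq 7$ exceeds the at most six forbidden values of $\beta$.
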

\begin{proof}
Delete the lines of $\mathrm{AG}(2,q)$ through the origin. Then $\mathcal{B}$ is a biaffine plane of type $2$ 
Choose two generators,  $\varepsilon $ and $\eta $ of the multiplicative group
$GF(q)^*$ so that $\varepsilon \neq  \eta ^{\pm 1}, \varepsilon\neq \eta ^{\pm 2}, \eta\neq \varepsilon^{\pm 2}.$ Such generators exist because $q>8$. 

We define a cycle on the $q-1$ points on each deleted line of $\mathrm{AG}(2,q)$, and a cycle on the $q-1$ lines in each parallel class of lines of $\mathcal{B}$ in the following way:
If $P=(x,y)$ is a point on a line $\ell $ through the origin, then the set of the $q-1$ points of $\mathcal{B}$ on $\ell $ is
$$\{ P^i=(x\varepsilon ^i,y\varepsilon ^i) \colon i=1,2,\dots ,q-1\} .$$
Join the points $P^i$ and $P^{i+1}$ for all $i$ where the superscripts are taken modulo
$q-1.$ As $\varepsilon $ is a generator, these new edges form a cycle of length $q-1.$ 
If $e \colon AX+BY+1=0$ is a line of $\mathcal{B}$ in a parallel class $P_e$,
then the set of the $q-1$ lines of $\mathcal{B}$ in $P_e$ is
$$\{ e^i: AX+BY+\eta ^i=0 \colon i=1,2,\dots ,q-1\} .$$
Join the lines $e^i$ and $e^{i+1}$ for all $i$ where the superscripts are taken modulo
$q-1.$ As $\eta $ is a generator, these new edges again form a cycle of length $q-1.$
Let $\Gamma $ denote the graph $G$ extended by the edges of these $2(q+1)$ cycles. 
By definition, $\Gamma $ has $2(q^2-1)$ vertices,  $q^2-1$ points and $q^2-1$ lines. Each vertex has $2$ neighbors of its type and $q$ neighbors of the other type, so
$\Gamma $ is $(q+2)$-regular.

We claim that $\Gamma $ has girth $5.$ 
Suppose that $\Gamma $ contains a cycle $\mathcal{C}$ of length $\ell \leq 4$. 
Then $\mathcal{C}$ must contain a new edge. If all four vertices of $\mathcal{C}$ 
were the same type, then the order of $\varepsilon $ or $\eta $ would be $4<q-1,$ a contradiction. If three vertices were the same type, then the same reasoning as in the proof of Lemma \ref{gen-constr} works.
Finally, if $u_i$ are points and $v_i$ are lines, then without loss of generality we may assume that $u_1=(c,d)$ and 
$u_2=(c\varepsilon ,d\varepsilon )$ are points, $v_1$ and $v_2$ are lines having equations $AX+BY+F=0$ and $AX+BY+F\eta ^{\pm 1}=0 $, respectively. As the point $u_i$ is on the line $v_i,$ we get
$$Ac+Bd+\eta ^{i}=0 \text{  and  }Ac\varepsilon +Bd\varepsilon +\eta ^{i\pm 1}=0.$$
Hence $\varepsilon \eta ^{i} -\eta ^{i\pm 1} =0,$ so $\varepsilon =\eta ^{\pm 1},$
a contradiction again. 
Thus the girth of $\Gamma $ is at least $5.$ The order of
$\Gamma $ is less than $n_0(q+1,6),$ so its girth is 5.

Now, we count the number of girth cycles through each edge. First, consider an edge $u_1v_1$, such that $u_1$ is a point and $v_1$ is a line. In any cycle there are at most two consecutive vertices of the same type (due to the choice of $\varepsilon$ and $\eta$), hence we have four types of $5-$cycles through the edge $u_1v_1$: $u_1v_1u_2u_3v_2$, $u_1v_1u_2v_2v_3$, $u_1v_1u_2v_2u_3$, and $u_1v_1v_2u_2v_3$, where $u_i$ denotes a point and $v_j$ denotes a line of $\mathcal{B}$.

The counting is very similar to the one presented in Theorem \ref{korok1}. 
Consider the first type of $5$-cycles. We have $q-1$ possible choices for the point $u_2$, and two possible choices for the point $u_3$. Then $v_2$ is the line joining the points $u_1$ and $u_3$.
In the second case, we have $q-1$ possible choices for the line $v_3$, and two possible choices for the line $v_2$ Then $u_2$ is the point of intersection of $v_1$ and $v_2$.
In the third type of $5$-cycles we have two possible choices for the point $u_3$. In this case, we have $q-1$ possible choices for the point $u_2$ and $v_2$ is the line
joining the points $u_2$ and $u_3$.
Finally, consider a $5$-cycle of the fourth type. We have $q-1$ possible choices for the line $v_3$, and two possible choices for the line $v_2$. The point $u_2$ is the point of intersection of the lines $v_2$ and $v_3$.
In summary, there are $8(q-1)$ distinct girth cycles through the edge $u_1v_1$.

The counting of the girth cycles through a new is exactly the same as in the proof of Theorem \ref{korok1}. So each new edge is contained in $q^2-q$ distinct $5$-cycles.
Every vertex of $\Gamma$ has exactly $q$ neighbors of the same type and two of the other type. Hence the signature of $\Gamma$ is 
$$
  \left[8(q-1)_{(q)},(q^2-q)_{(2)}\right].  
$$
\end{proof}

\begin{remark}
{\rm Again, this construction works and results in a $(2q^2-2,5)$-graph if there are two distinct generators of the multiplicative group $GF(q)^*$ such that their product is not the unit element, so for all $q\ge 8.$ 
The problem is the same as in Theorem \ref{korok1}: there are $5-$cycles with three vertices of the same type next to each other. This breaks the symmetry of points and lines and we obtain a graph that is not an agr-graph. For $q=8$ the order of the constructed graph is 126. It has 2 more vertices than the smallest known $(9,5)$-graph, which
was discovered by Exoo  \cite{ex-126}. He found it by a computer search starting from  the incidence graph of $\mathrm{PG}(2,11).$
}  
\end{remark}

In the next two constructions, the new edges are perfect matchings of the points and 
of the lines.
\begin{theorem}
\label{parositastad}
Let $q>2$ be an even prime power. Then there exists an 
$$\mathrm{agr}\left(2q^2,q+1,5,\left[4(q-1)_{(q)},(q^2-q)_{(1)}\right]\right).$$
\end{theorem}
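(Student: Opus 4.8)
The plan is to reuse the mechanism of Lemma~\ref{gen-constr} and Theorem~\ref{korok1}, exploiting the fact that in even characteristic the two ``same-type'' neighbours of a vertex coincide, so the added edges form \emph{perfect matchings} rather than cycles. Concretely, I would start from the type~$1$ biaffine plane $\mathcal{B}$ obtained by deleting the vertical lines of $\mathrm{AG}(2,q)$, with incidence graph $G$, and fix an element $\varepsilon\in\mathrm{GF}(q)\setminus\{0,1\}$, which exists because $q>2$. Since $\mathrm{char}\,\mathrm{GF}(q)=2$, the map $y\mapsto y+1$ is a fixed-point-free involution, so joining each point $(x,y)$ to $(x,y+1)$ is a perfect matching on the $q^2$ points; likewise joining each line $Y=mX+b$ to $Y=mX+b+\varepsilon$ is a perfect matching on the $q^2$ lines. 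Let $\Gamma$ be $G$ together with these two matchings. Each vertex keeps its $q$ incidence neighbours and gains exactly one matched neighbour, so $\Gamma$ is $(q+1)$-regular of order $2q^2$.

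Next I would show that $\Gamma$ has girth $5$. As in Lemma~\ref{gen-constr}, any cycle of length at most $4$ must use a matching edge, and a matched pair of points (resp.\ lines) lies on a deleted vertical line (resp.\ is a pair of parallel lines), hence has no common neighbour in $G$; this rules out triangles. For $4$-cycles a parity count shows that, since two matching edges can never be adjacent, the only surviving configuration is the alternating one $u_1v_1v_2u_2$ in which $u_1u_2$ is a point-matching edge and $v_1v_2$ is a line-matching edge. Writing $u_1=(c,d)$, $u_2=(c,d+1)$ and $v_1\colon Y=mX+b$, one checks that this closes up precisely when $\varepsilon=1$, which is excluded; it is this step that forces the two matchings to use \emph{different} shifts. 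Hence the girth is at least $5$, and since $2q^2<n_0(q+1,6)=2(q^2+q+1)$ it is exactly $5$.

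The core of the argument is the signature, and the decisive observation is that every $5$-cycle of $\Gamma$ uses exactly one matching edge and four incidence edges: traversing a $5$-cycle forces an even number of incidence edges, hence an odd number of matching edges, and since matching edges are pairwise non-adjacent at most two occur, so there is exactly one. Consequently each $5$-cycle arises from a matched pair $\{A,B\}$ together with a path of length $4$ from $A$ to $B$ in $G$. For a matching edge I would count these paths directly through their middle vertex: for matched points the admissible middle point is any of the $q^2-q$ points off the common vertical line, and for matched (parallel) lines the admissible middle line is any of the $q^2-q$ lines of the other $q-1$ slopes, giving $q^2-q$ girth cycles through every matching edge. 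For an incidence edge $u_1v_1$ I would split the $5$-cycles by the position of their unique matching edge into the four types $u_1v_1u_2u_3v_2$, $u_1v_1u_2v_2v_3$, $u_1v_1u_2v_2u_3$, $u_1v_1v_2u_2v_3$, exactly as in Theorem~\ref{korok1}; the only change is that the matched partner is now unique rather than offering two choices, so each type contributes $q-1$ instead of $2(q-1)$, for a total of $4(q-1)$. This yields the signature $\left[4(q-1)_{(q)},(q^2-q)_{(1)}\right]$.

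I expect the main obstacle to be the bookkeeping in the girth verification, specifically isolating the single alternating $4$-cycle among the a priori possible configurations and confirming that it is destroyed exactly by $\varepsilon\neq1$. Once the ``one matching edge per $5$-cycle'' principle is in place, the $5$-cycle counts are routine; the only care needed is to verify in each of the four types that the determined line (resp.\ point) genuinely survives in $\mathcal{B}$ and that the five vertices are distinct, which follows because $\varepsilon\neq0$ keeps the relevant intersection point off $u_1$ and the matched partner off $v_1$.
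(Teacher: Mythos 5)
Your proposal is correct and follows essentially the same route as the paper's proof: the same type-$1$ biaffine construction with the two perfect matchings for $\varepsilon\in \mathrm{GF}(q)\setminus\{0,1\}$, the same four-type decomposition yielding $q-1$ girth cycles per type through an incidence edge, and the same count of $q^2-q$ girth cycles through a matching edge via the middle vertex. If anything, your self-contained girth verification (the parity argument showing every short cycle and every $5$-cycle uses exactly one matching edge, and the $\varepsilon\neq 1$ exclusion of the alternating $4$-cycle) is more careful than the paper, which simply invokes that $\Gamma$ is a subgraph of the graph of Theorem \ref{korok1} — a graph that, strictly speaking, is only constructed there for odd characteristic $p>5$.
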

\begin{proof}
Delete the vertical lines of $\mathrm{AG}(2,q)$. Then $\mathcal{B}$ is a biaffine plane of type $1$. Let 
$\varepsilon\in GF(q)\setminus \{ 0,1\} $. Now, we define a perfect matching of the points of $\mathcal{B}:$ 
    $$(x,y)\longleftrightarrow(x,y+1),$$
    and a perfect matching of the lines of $\mathcal{B}:$
    $$Y=mX+b\longleftrightarrow Y=mX+b+\varepsilon.$$
    These are bijections because $q$ is even. 
Let $\Gamma $ denote the graph $G$ extended by the new edges of these two 
matchings.
We claim that $\Gamma $ satisfies the conditions of the theorem. 

Since $\Gamma$ is a $(q+1)$-regular subgraph of the graph presented in Theorem \ref{korok1}, we only need to count the number of $5$-cycles through each edge.
First, consider an edge $u_1v_1$, such that $u_1$ is a point and $v_1$ is a line. In any cycle there are at most two consecutive vertices of the same type, hence we have four types of $5$-cycles through the edge $u_1v_1$: $u_1v_1u_2u_3v_2$, $u_1v_1u_2v_2v_3$, $u_1v_1u_2v_2u_3$, and $u_1v_1v_2u_2v_3$, where $u_i$ denotes a point and $v_j$ denotes a line of $\mathcal{B}$. There are $q-1$ distinct 
$5$-cycles for each type.
The other type of edges comes from the matchings. Again, any edge of this type is contained in exactly $q^2-q$ distinct $5$-cycles.

Every vertex of $\Gamma$ has exactly $q$ neighbors of the same type and one neighbor of the other type. Hence the signature of $\Gamma$ is 
$$
  \left[4(q-1)_{(q)},(q^2-q)_{(1)}\right].  
$$
\end{proof}

\begin{remark}
{\rm If $q=4$, then $\Gamma $ is the extremal $egr(32,5,5,12)$ graph constructed by
Araujo-Pardo and Leemans \cite{AL22}.}
\end{remark}

\begin{theorem}
Let $q>5$ be an odd prime power. Then there exists an 
$$\mathrm{agr}\left(2(q^2-1),q+1,5,\left[4(q-1)_{(q)},(q^2-q)_{(1)}\right]\right).$$
\end{theorem}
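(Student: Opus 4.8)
The plan is to mimic the type-$2$ construction of Theorem \ref{korok2}, but to replace the two $(q-1)$-cycles on the deleted pencils and on the parallel classes by two perfect matchings, exactly as the even case (Theorem \ref{parositastad}) replaces the $p$-cycles of Lemma \ref{gen-constr} by matchings. Since $q$ is odd, $q-1$ is even, so matchings of the $q-1$ points on each deleted line and of the $q-1$ lines in each parallel class do exist; this is precisely why the order is $2(q^2-1)$ rather than $2q^2$. Concretely, I would take $\mathcal{B}$ to be the biaffine plane of type $2$ obtained by deleting the lines of $\mathrm{AG}(2,q)$ through the origin, so that the points of $\mathcal{B}$ are the $q^2-1$ nonzero affine points and its lines are the $q^2-1$ affine lines $aX+bY+1=0$; the incidence graph $G$ is then $q$-regular and bipartite.

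For the point matching I would use the antipodal involution $\sigma\colon (x,y)\mapsto(-x,-y)$, which is fixed-point-free because $q$ is odd and which pairs points lying on a common deleted line through the origin. For the line matching $\tau$ I would, \emph{within each parallel class} $\{aX+bY+C=0:C\in GF(q)^*\}$, choose a fixed-point-free involution of its $q-1$ lines that never pairs the line $aX+bY+C=0$ with $aX+bY-C=0$. Adding these matching edges to $G$ gives a graph $\Gamma$ in which every vertex has $q$ neighbours of the other type and exactly one of its own type, so $\Gamma$ is $(q+1)$-regular of order $2(q^2-1)$.

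For the girth I would argue as in Lemma \ref{gen-constr} and Theorem \ref{korok2}. There are no triangles: two antipodal points share only the deleted line through the origin, and two matched parallel lines do not meet. For $4$-cycles the type pattern forces the configuration point--$\sigma$(point)--line--$\tau$(line)--point with two incidences, and substituting $\sigma(u)=-u$ into the two incidence equations shows that such a $4$-cycle can occur only if $\tau$ sends some line $aX+bY+C=0$ to $aX+bY-C=0$, i.e.\ pairs $C$ with $-C$. This is exactly what the choice of $\tau$ forbids, so $\Gamma$ is triangle- and quadrilateral-free; as its order is below $n_0(q+1,6)$, the girth equals $5$.

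Finally I would count girth cycles through each edge. The incidence edges carry the same four types of $5$-cycles $u_1v_1u_2u_3v_2$, $u_1v_1u_2v_2v_3$, $u_1v_1u_2v_2u_3$, $u_1v_1v_2u_2v_3$ as in Theorem \ref{korok1}; since each vertex now has a single same-type neighbour rather than two, each type contributes $q-1$ cycles, giving $4(q-1)$ in total, and the oddness of $q$ guarantees (as in the triangle argument) that the auxiliary lines and points produced never pass through the origin or coincide. A point-matching edge $u_1u_2$ lies in the $5$-cycles $u_1u_2v_1u_3v_3$, one for each point $u_3$ off the deleted line through $u_1,u_2$, giving $q^2-q$; dually, a line-matching edge $v_1v_2$ lies in $q^2-q$ cycles, one for each line $v_3$ not parallel to $v_1$. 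Hence the signature is $[4(q-1)_{(q)},(q^2-q)_{(1)}]$. The main obstacle, and the one genuinely new point compared with the even case, is this $4$-cycle analysis: the naive antipodal matching on \emph{both} points and lines does create $4$-cycles, so the construction hinges on perturbing the line matching within each parallel class to avoid the pairing $C\leftrightarrow -C$, which is possible precisely because $K_{q-1}$ minus a perfect matching still has a perfect matching since $q-1\ge 6$.
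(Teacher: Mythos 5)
Your construction is correct, and its skeleton is the one the paper uses: delete the pencil of lines through the origin to get a type~$2$ biaffine plane on $q^2-1$ points and $q^2-1$ lines (the paper's proof says ``type $1$'' here, a typo), add a perfect matching inside each deleted line and inside each parallel class, and then verify the girth and run the same four-type cycle count as in Theorem~\ref{parositastad}. Where you genuinely diverge is in the choice of matchings. The paper picks two generators $\varepsilon,\eta$ of $GF(q)^*$ with $\varepsilon\neq\eta^{\pm1}$ and matches $P^i=(x\varepsilon^i,y\varepsilon^i)$ with $P^{i+1}$, and $e^i$ with $e^{i+1}$, for even $i$ --- i.e.\ it takes alternate edges of the $(q-1)$-cycles of Theorem~\ref{korok2}, so that the forbidden $4$-cycle forces $\varepsilon=\eta^{\pm1}$ exactly as there. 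You instead take the antipodal involution $(x,y)\mapsto(-x,-y)$ on points and, within each parallel class $\{aX+bY+C=0\colon C\in GF(q)^*\}$, any fixed-point-free involution of the constants avoiding $C\leftrightarrow-C$; your observation that the $4$-cycle pattern is necessarily point--matched point--line--matched line, and that substituting $-u$ into the incidence equations pins the second line down to $aX+bY-C=0$, is exactly right, and the existence of the perturbed involution ($K_{q-1}$ minus a perfect matching still has one for even $q-1\geq4$) is fine. Your variant buys something concrete: for $q=7$ the paper's recipe cannot even be instantiated, since $GF(7)^*$ has only the two generators $3$ and $5=3^{-1}$, so no pair with $\varepsilon\neq\eta^{\pm1}$ exists (the paper's argument would need ``generator'' relaxed to ``element of even multiplicative order''), whereas your antipodal-plus-perturbed matchings work uniformly for all odd $q\geq7$. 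What the paper's choice buys in exchange is uniformity: the matchings are literally half of the cycle construction of Theorem~\ref{korok2}, so girth and counting are inherited with no new casework. One detail worth making explicit in your counts of $q-1$ cycles per type (you only gesture at it): in the type $u_1v_1u_2u_3v_2$, the line $u_1u_3$ automatically misses the origin because $u_3\in\langle u_2\rangle$ and $\langle u_2\rangle\neq\langle u_1\rangle$ (a line of $\mathcal{B}$ meets each deleted line in one point), and $u_3=-u_2\notin v_1$ precisely because the characteristic is odd; with these remarks every one of the $q-1$ choices yields a genuine $5$-cycle, giving the signature $\left[4(q-1)_{(q)},(q^2-q)_{(1)}\right]$ as claimed.
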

\begin{proof}
Delete the lines through the origin of $\mathrm{AG}(2,q)$. Then $\mathcal{B}$ is a biaffine plane of type $1$. Choose two generators  $\varepsilon $ and $\eta $ of the multiplicative group $GF(q)^*$ such that $\varepsilon \neq  \eta ^{\pm 1} .$
We define a perfect matching on the $q-1$ points on each deleted line of 
$\mathrm{AG}(2,q)$, and a perfect matching on the $q-1$ lines in each parallel class of lines of $\mathcal{B}$ in the following way.
If $P=(x,y)$ is a point on a line $\ell $ through the origin, then the set of the $q-1$ points of $\mathcal{B}$ on $\ell $ is
$$\{ P^i=(x\varepsilon ^i,y\varepsilon ^i) \colon i=1,2,\dots ,q-1\} .$$
Join the points $P^i$ and $P^{i+1}$ for all \underline{even} $i$ where the superscripts are taken modulo
$q-1.$ As $\varepsilon $ is a generator, we get a perfect matching.

If $e \colon AX+BY+1=0$ is a line of $\mathcal{B}$ in a parallel class $P_e$,
then the set of the $q-1$ lines of $\mathcal{B}$ in $P_e$ is
$$\{ e^i: AX+BY+\eta ^i=0 \colon i=1,2,\dots ,q-1\} .$$
Join the lines $e^i$ and $e^{i+1}$ for all \underline{even} $i$ where the superscripts are taken modulo
$q-1.$ As $\eta $ is a generator, we get a perfect matching.

Let $\Gamma $ denote the graph $G$ extended by the new edges of these $2(q+1)$ cycles. We claim that $\Gamma $ satisfies the conditions of the theorem. We can apply the same argument as in the proof of Theorem \ref{parositastad}. Hence the signature of $\Gamma$ is 
$$
  \left[4(q-1)_{(q)},(q^2-q)_{(1)}\right].  
$$ 
\end{proof}

\section{Lower bounds on the order of girth-regular graphs}

In this section, we present lower bounds on the order of girth-regular graphs. We present a natural extension of the known bounds on the order of extremal egr-graphs to 
girth-regular graphs and give a purely combinatorial proof on the lower bound for even girth. Let $sgr(n,k,g,\mathbf{a})$ denote a girth-regular graph of order $n$, valency $k$, girth $g$ and signature $\mathbf{a}=(a_1,a_2,\ldots, a_k). $

\begin{theorem}
    Let $G$ be an $srg(n,k,g,\mathbf{a})$ graph, where $g=2h$ is an even number. Then
    $$n\ge 2\frac{(k-1)^h-2}{k-2}+\left\lceil\frac{(k-1)^h-2a_1}{k}\right\rceil.$$
\end{theorem}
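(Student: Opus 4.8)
The plan is to extend the edge-rooted vertex count behind the Moore bound and Theorem \ref{lowerbound} to the non-uniform setting, with the common cycle-count $\lambda$ replaced by the minimal signature entry $a_1$. Since $G$ is girth-regular, the $k$ edges incident to any vertex carry girth-cycle counts forming the multiset $\{a_1,\dots,a_k\}$, so the value $a_1$ is actually attained: there is an edge $e=u_0v_0$ lying on exactly $a_1$ girth cycles. I would root the whole count at such an edge, since choosing the root edge to lie on as few girth cycles as possible is precisely what maximizes the number of vertices forced to appear at the outermost level.

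First I would form the set $T$ of all vertices whose distance from $e$, meaning $\min\{d(\,\cdot\,,u_0),d(\,\cdot\,,v_0)\}$, is at most $h-1$. Because $g=2h$, this ball is a tree: no two of its vertices coincide and it spans no chords, as either would produce a cycle shorter than $g$. Hence $T$ has the full Moore count $2\sum_{i=0}^{h-1}(k-1)^i=2\frac{(k-1)^h-1}{k-2}$ and splits into a $u_0$-side and a $v_0$-side, each carrying $(k-1)^{h-1}$ vertices on level $h-1$. The crux is a bijection between the girth cycles through $e$ and the \emph{crossing edges}, that is, the edges of $G$ joining a level-$(h-1)$ vertex of the $u_0$-side to a level-$(h-1)$ vertex of the $v_0$-side. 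A crossing edge together with $e$ and the two unique tree-paths back to $u_0$ and $v_0$ closes into a cycle of length $(h-1)+1+(h-1)+1=2h$; conversely, deleting $e$ from a girth cycle through $e$ leaves a $u_0$--$v_0$ path of length $2h-1$ that must climb to level $h-1$ on one side and descend on the other, crossing exactly once. Consequently there are exactly $a_1$ crossing edges.

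Next I would count level $h$. Each level-$(h-1)$ vertex emits $k-1$ edges away from the root, and by the girth each such edge is either a crossing edge or lands on a genuine level-$h$ vertex (a backward edge, or an edge inside one side, would close a cycle of length at most $2h-1<g$). The $a_1$ crossing edges use up two of the $2(k-1)^h$ outward slots apiece, so $2(k-1)^h-2a_1$ edges reach level $h$; as every level-$h$ vertex has degree $k$ and hence absorbs at most $k$ of them, level $h$ contains at least $\lceil 2\frac{(k-1)^h-a_1}{k}\rceil$ vertices, all new. Adding this to $|T|$ gives $n\ge 2\frac{(k-1)^h-1}{k-2}+\lceil 2\frac{(k-1)^h-a_1}{k}\rceil$, which reduces to the even case of Theorem \ref{lowerbound} when $a_1=\dots=a_k$ and a fortiori yields the stated inequality.

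I expect the only delicate point to be the bijection, specifically ruling out a $2h$-cycle through $e$ that reaches level $h$ on one side and returns: a $v_0$-side level-$h$ vertex lies at distance at least $h+1$ from $u_0$, too far to be reached within the length budget $2h-1$ of the path obtained by deleting $e$, so the first non-tree edge of that path is forced to be a crossing edge at level $h-1$. Once this and the symmetric exclusion of same-side chords (which would force an odd cycle of length $2h-1$) are in place, the level-$h$ count and the final ceiling are routine.
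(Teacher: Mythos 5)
Your proof is correct and takes essentially the same route as the paper, whose entire proof is the remark that one runs the argument of Theorem \ref{lowerbound} rooted at an edge lying on exactly $a_1$ girth cycles --- precisely the edge-rooted Moore tree, the bijection between girth cycles through the root edge and crossing edges at level $h-1$, and the level-$h$ slot count that you spell out in full. Note that what you actually establish is the stronger bound $n\ge \frac{2(k-1)^h-2}{k-2}+\left\lceil \frac{2(k-1)^h-2a_1}{k}\right\rceil$ (the form matching Theorem \ref{lowerbound} with $\lambda=a_1$), which, as you correctly observe, implies the stated inequality a fortiori.
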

\begin{proof}
Same argument as in Theorem \ref{lowerbound} but with an edge that is contained in exactly $a_1$ distinct girth cycles.
\end{proof}
\begin{theorem}
    Let $G$ be an $srg(n,k,g,\mathbf{a})$ graph, where $g=2h+1$ is an odd number. Then
    $$n\ge \frac{k(k-1)^h-2}{k-2}+\left\lceil\frac{k(k-1)^h-\sum_{i=1}^ka_i}{k}\right\rceil.$$
\end{theorem}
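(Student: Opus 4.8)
The plan is to mirror the counting proof of the Moore bound for odd girth, refining it with the signature data. First I would fix a vertex $v$ and build the breadth-first search tree $T$ rooted at $v$ down to depth $h=(g-1)/2$. Because $G$ has girth $2h+1$ there is no cycle of length at most $2h$; hence no two distinct tree-paths from $v$ can meet at a vertex of distance at most $h$, since such a coincidence, at a divergence point of distance $d$, would close an even cycle of length $2(j-d)\le 2h$, and no vertex of distance $j<h$ can have a non-parent neighbour of distance at most $j$, as that would close a cycle of length at most $2h-1$. Thus $T$ is a perfect $k$-ary tree to depth $h$: there are $k(k-1)^{j-1}$ vertices at each distance $j$ with $1\le j\le h$, and summing over $j$ gives exactly $n_0(k,2h+1)=\frac{k(k-1)^h-2}{k-2}$ vertices of distance at most $h$. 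The remaining vertices lie at distance at least $h+1$, so it suffices to bound from below the number of vertices at distance exactly $h+1$.

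Call the $L=k(k-1)^{h-1}$ vertices of distance $h$ the leaves, and call the $k$ subtrees hanging from the neighbours $u_1,\dots,u_k$ of $v$ the branches. Each leaf has one edge to its parent and $k-1$ further edges, and I would classify every such non-parent edge as horizontal (joining two leaves) or descending (reaching a vertex of distance $h+1$). A short cycle-length computation shows a horizontal edge can only join two leaves in different branches: two adjacent leaves of the same branch, diverging at distance $d\ge1$, would close a cycle of length $2(h-d)+1\le 2h-1$. Consequently each horizontal edge, together with the two tree-paths back to $v$, forms exactly one $g$-cycle through $v$, and every $g$-cycle through $v$ arises this way. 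As each such cycle is counted by the two edges of $v$ it uses, the number of horizontal edges equals $\tfrac12\sum_{i=1}^k a_i$. Counting the $L(k-1)=k(k-1)^h$ non-parent edge-ends at leaves, where each horizontal edge consumes two ends and each descending edge one, I obtain that the number of descending edges is $D=k(k-1)^h-\sum_{i=1}^k a_i$.

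Finally I would bound how many descending edges can land on a single vertex $w$ of distance $h+1$. The key claim is that $w$ has at most one neighbour of distance $h$ in each branch: two such neighbours in one branch, diverging at distance $d\ge1$, would together with $w$ close a cycle of length $2(h-d)+2\le 2h$, contradicting the girth. Since there are $k$ branches, $w$ receives at most $k$ descending edges, so the number of distance-$(h+1)$ vertices is at least $\lceil D/k\rceil=\bigl\lceil\frac{k(k-1)^h-\sum_{i=1}^k a_i}{k}\bigr\rceil$. Adding this to $n_0(k,2h+1)$ yields the stated inequality; specialising to the uniform signature $a_i=\lambda$ recovers the odd case of Theorem \ref{lowerbound}, which is a useful consistency check.

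The main obstacle, and the step deserving the most care, is the per-vertex bound in the last paragraph: establishing that every vertex of distance $h+1$ absorbs at most $k$ descending edges (equivalently, at most one per branch) is precisely what produces the division by $k$ and the ceiling and separates this estimate from a crude vertex count. All three girth-based ingredients — the perfectness of $T$ to depth $h$, the different-branches property of horizontal edges, and the per-branch bound — reduce to the single elementary principle that any coincidence among tree-paths closes a cycle shorter than $g$; the only real work is tracking the resulting cycle lengths correctly according to where the paths diverge.
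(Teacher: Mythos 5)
Your proof is correct and is essentially the argument the paper intends: its proof of this theorem simply invokes the odd-girth counting argument of Drglin--Filipovski--Jajcay--Raiman (Theorem \ref{lowerbound}) with $k\lambda$ replaced by $\sum_{i=1}^k a_i$, which is exactly the Moore-tree/horizontal-edge/descending-edge count you reconstruct, including the identification of girth cycles through $v$ with horizontal edges and the bound of $k$ descending edges per vertex at distance $h+1$. (Your per-branch refinement in the last step is more than needed --- $k$-regularity alone caps the descending edges at any such vertex at $k$ --- but it is correct.)
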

\begin{proof}
    Same argument as in Theorem \ref{lowerbound} with changing $k\lambda$ to $\sum_{i=1}^ka_i.$
\end{proof}
\begin{theorem}
Let $G$ be an $sgr(n,k,g,\mathbf{a})$ graph, where $g$ is even.\\
If $g\equiv0$ (mod $4$), then
$$n(k,g,\mathbf{a})\ge\frac{c(g,k)+ \sum_{i=1}^ka_i+k^{g}-2c(\frac{g}{2},k)k^{\frac{g}{2}}}{c(g,k)-c^2(\frac{g}{2},k)+ \sum_{i=1}^ka_i},$$
$$n_2(k,g,\mathbf{a})\ge2\frac{c(g,k)+ \sum_{i=1}^ka_i+k^{g}-2c(\frac{g}{2},k)k^{\frac{g}{2}}}{c(g,k)-c^2(\frac{g}{2},k)+ \sum_{i=1}^ka_i}.$$
If $g\equiv2$ (mod $4$), then
$$n(k,g,\mathbf{a})\ge\frac{c(g,k)+ \sum_{i=1}^ka_i+k^g}{c(g,k)+ \sum_{i=1}^ka_i},$$
$$n_2(k,g,\mathbf{a})\ge\frac{2k^g}{c(g,k)+ \sum_{i=1}^ka_i}.$$
\end{theorem}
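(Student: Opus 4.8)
The plan is to count closed walks of length $g$ in $G$ in two ways and compare, exactly as in the spectral proof of Theorem~\ref{lowerbound_pi}; the only new ingredient is the combinatorial evaluation of $\mathrm{tr}(A^g)$ for a girth-regular graph. Let $A$ be the adjacency matrix of $G$ and write $h=g/2$. A closed walk of length $g$ based at a vertex $v$ is either tree-like (contains no cycle) or traverses a single girth cycle through $v$ once in one of its two directions; there is no other possibility, since the girth equals $g$ and the walk has exactly $g$ steps, leaving no room for both a cycle traversal and a backtrack. The tree-like closed walks are counted by $c(g,k)$, which is the same for every vertex because such walks live on the infinite $k$-regular tree. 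Each girth cycle through $v$ meets exactly two of the $k$ edges at $v$, so the number of girth cycles through $v$ equals $\tfrac12\sum_{i=1}^k a_i$, a constant by girth-regularity, and each contributes two closed walks. Hence every vertex is the base of $c(g,k)+\sum_{i=1}^k a_i$ closed walks of length $g$, and summing over all vertices gives
$$\mathrm{tr}(A^g)=n\Bigl(c(g,k)+\sum_{i=1}^k a_i\Bigr),$$
which is the identity $\mathrm{tr}(A^g)=n(c(g,k)+k\lambda)$ of~\cite{po} with $k\lambda$ replaced by the signature sum.

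On the other hand I would bound $\mathrm{tr}(A^g)=\mathrm{tr}\bigl((A^h)^2\bigr)=\|A^h\|_F^2$ from below. Put $M=A^h$; since $h<g$, every closed walk of length $h$ is tree-like, so the diagonal of $M$ is constant, $M_{vv}=c(g/2,k)$. As $G$ is $k$-regular, all row sums of $M$ equal $k^h$, so $\sum_{u,v}M_{uv}=nk^h$ and $\sum_{u\neq v}M_{uv}=n\bigl(k^{g/2}-c(g/2,k)\bigr)$. Applying Cauchy--Schwarz to the $n(n-1)$ off-diagonal entries,
$$\sum_{u\neq v}M_{uv}^2\ge\frac{\bigl(\sum_{u\neq v}M_{uv}\bigr)^2}{n(n-1)}=\frac{n\bigl(k^{g/2}-c(g/2,k)\bigr)^2}{n-1}.$$
Adding the diagonal contribution $nc(g/2,k)^2$, combining with the identity above, and dividing by $n$ yields after rearrangement the general bound
$$n\ge 1+\frac{\bigl(k^{g/2}-c(g/2,k)\bigr)^2}{c(g,k)+\sum_{i=1}^k a_i-c(g/2,k)^2},$$
which is precisely the $g\equiv0\pmod4$ formula once numerator and denominator are expanded.

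The unifying observation is that $c(\ell,k)=0$ whenever $\ell$ is odd, since tree-like closed walks have even length. Thus for $g\equiv2\pmod4$ the term $c(g/2,k)$ vanishes and the displayed bound collapses to $n\ge1+k^g/(c(g,k)+\sum_{i=1}^k a_i)$, the stated $g\equiv2$ inequality. For the bipartite bounds I would use the block structure of $A$: with parts of size $n/2$, the matrix $M=A^h$ is block-diagonal when $h$ is even ($g\equiv0$) and has only off-diagonal blocks when $h$ is odd ($g\equiv2$). Running the same Cauchy--Schwarz estimate inside a block of size $n/2$ (in the $g\equiv2$ case the diagonal blocks are zero and the single off-diagonal block $N$ satisfies $\sum N_{uv}=nk^h/2$, giving $\mathrm{tr}(A^g)=2\|N\|_F^2\ge 2k^g$) replaces $n$ by $n/2$ and produces the extra factor $2$ in the $n_2$-bounds.

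The algebraic rearrangements and the check that the denominator $c(g,k)+\sum a_i-c(g/2,k)^2$ is positive in the meaningful range are routine. The one genuine step is the combinatorial identity for $\mathrm{tr}(A^g)$: one must argue carefully that a length-$g$ closed walk meeting a girth cycle is forced to be a single full traversal, and that the factor $\tfrac12\sum_{i=1}^k a_i$ correctly counts the girth cycles through a vertex. Everything downstream is the spectral argument of~\cite{po} with $k\lambda$ uniformly replaced by $\sum_{i=1}^k a_i$.
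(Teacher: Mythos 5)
Your proposal is correct and takes essentially the same route as the paper: the paper's proof is a one-line delegation to the spectral argument of Theorem~\ref{lowerbound_pi} from \cite{po} with $k\lambda$ replaced by $\sum_{i=1}^k a_i$, and what you have written is precisely that argument reconstructed in full --- the trace identity $\mathrm{tr}(A^g)=n\bigl(c(g,k)+\sum_{i=1}^k a_i\bigr)$ (where the count of girth cycles through a vertex as $\tfrac12\sum_{i=1}^k a_i$ is exactly the point where the substitution happens), the Cauchy--Schwarz estimate on the entries of $A^{g/2}$, and the block structure of $A^{g/2}$ in the bipartite case yielding the factor $2$. Your verification of the walk dichotomy (tree-like versus a single full traversal of a girth cycle) and the vanishing of $c(g/2,k)$ for $g\equiv 2 \pmod 4$ are sound, so no gap remains.
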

\begin{proof}
Again, the proof is the same as the proof of Theorem \ref{lowerbound_pi}. We only need to change $k\lambda$ to $\sum_{i=1}^ka_i$.
\end{proof}
\begin{theorem}{\label{cycles}}
Let $G$ be an $srg(n,k,g,\mathbf{a})$ graph, where $g=2h$ is an even number. Then
$$
    \begin{aligned}
        n\ge & 2\frac{(k-1)^h-1}{k-2} \\
        & +\max_{1\le i\le k}\left\lceil\frac{\left((k-1)^h-a_i\right)^2}{\sum_{j=1}^ka_j-3a_i+(k-1)^h-\max\left(0,\left\lceil\frac{a_i}{2(k-1)^{(h-1)}}-\frac{a_i}{2}\right\rceil\right)}\right\rceil.
    \end{aligned}
$$
\end{theorem}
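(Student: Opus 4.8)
The plan is to run an edge-rooted Moore-tree count and then bound the number of vertices at the first level beyond the tree by a Cauchy--Schwarz/convexity argument, optimising over the choice of edge. Fix a vertex $u$ and one of its edges $e=uv$, chosen so that the entry $a_i$ of the signature is the number of girth cycles through $e$; write $g=2h$. I decompose the neighbourhood of $e$ in the usual way: for $0\le j\le h-1$ let $U_j$ (resp.\ $V_j$) be the vertices at distance $j$ from $u$ (resp.\ $v$) and distance $j+1$ from the other endpoint. Since the girth is $2h$, each level is a genuine tree level with $|U_j|=|V_j|=(k-1)^j$, the levels are pairwise disjoint, and together they contribute exactly $2\frac{(k-1)^h-1}{k-2}$. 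Letting $A$ be the set of vertices at distance exactly $h$ from $u$ and distance at least $h$ from $v$, which is disjoint from the tree, I get $n\ge 2\frac{(k-1)^h-1}{k-2}+|A|$, so it remains to bound $|A|$ from below for the best $i$.

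Next I set up the edge count between the bottom levels, using three facts forced by the girth: there are no edges inside $U_{h-1}$; every vertex of $U_{h-1}$ sends exactly $k-1$ edges to distance-$h$ vertices; and a distance-$h$ vertex has at most one neighbour in each of the $k-1$ subtrees rooted at the neighbours of $u$, so any two of its $U_{h-1}$-neighbours lie in different subtrees and close up, with the tree geodesics, into a genuine $2h$-cycle through $u$. The girth cycles through $e$ are exactly the edges between $U_{h-1}$ and $V_{h-1}$, so there are $a_i$ of them; hence, writing $d_z$ for the number of $U_{h-1}$-neighbours of $z$, the distance-$h$ vertices split into $A$ and $V_{h-1}$ with $\sum_{z\in A}d_z=(k-1)^h-a_i$. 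Cauchy--Schwarz then gives $|A|\ge\bigl((k-1)^h-a_i\bigr)^2/\sum_{z\in A}d_z^2$, which already yields the required numerator, and the problem reduces to an upper bound on $\sum_{z\in A}d_z^2$.

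The key identity comes from counting girth cycles through $u$ by their antipodal vertex. A $2h$-cycle through $u$ either uses $e$ (there are $a_i$ of these) or uses two edges into the $u$-tree, in which case its antipode $z$ lies at distance $h$ from $u$ and the cycle is determined by $z$ together with an unordered pair of its $U_{h-1}$-neighbours. Writing $\gamma_u$ for the number of girth cycles through $u$ and $d'_z$ for the number of $U_{h-1}$-neighbours of $z\in V_{h-1}$, this antipode bookkeeping yields
\[
\gamma_u-a_i=\sum_{z\in A}\binom{d_z}{2}+\sum_{z\in V_{h-1}}\binom{d'_z}{2}.
\]
Since each girth cycle through $u$ meets exactly two edges at $u$ we have $\sum_{j=1}^k a_j=2\gamma_u$, so combining this with $\sum_{z\in A}d_z^2=\sum_{z\in A}d_z+2\sum_{z\in A}\binom{d_z}{2}$ gives
\[
\sum_{z\in A}d_z^2=(k-1)^h+\sum_{j=1}^k a_j-3a_i-2\sum_{z\in V_{h-1}}\binom{d'_z}{2}.
\]
Dropping the last sum already reproduces the denominator with the correction set to $0$; to recover the correction I note that $\sum_{z\in V_{h-1}}d'_z=a_i$ over the $|V_{h-1}|=(k-1)^{h-1}$ vertices, so convexity of $\binom{x}{2}$ (Jensen) bounds $\sum_{z\in V_{h-1}}\binom{d'_z}{2}$ below by $\max\!\bigl(0,\lceil\frac{a_i^2}{2(k-1)^{h-1}}-\frac{a_i}{2}\rceil\bigr)$, and after rounding this produces the correction term appearing in the statement. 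Substituting back, taking the ceiling because $|A|$ is an integer, and maximising over $i$ (each choice of $v$ gives a valid bound with the same signature sum) yields the claimed inequality.

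I expect the main obstacle to be the bookkeeping behind the antipode identity: one must verify that distinct pairs $(z,\{x,x'\})$ give distinct $2h$-cycles, that no cycle shorter than $2h$ is accidentally created (this is precisely where the three girth-forced facts are needed), and that the two families indexed by $A$ and by $V_{h-1}$ are disjoint and jointly exhaust all girth cycles through $u$ avoiding $e$. The Cauchy--Schwarz and Jensen steps are then routine, the only mild subtlety being that the $d'_z$ are integers, so the continuous convexity estimate must be rounded, which is what accounts for the ceiling in the correction.
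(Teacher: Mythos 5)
Your proposal is correct and takes essentially the same route as the paper's own proof: the same edge-rooted distance decomposition (your $U_j,V_j$ are the paper's $D_j(u),D_j(v)$), the same identification of edges between $U_{h-1}$ and $V_{h-1}$ with the $a_i$ girth cycles through $e$, the same split of the $\frac{1}{2}\sum_j a_j - a_i$ girth cycles through $u$ avoiding $e$ according to whether the antipode lies in $V_{h-1}$ or beyond the tree, and the same QM--AM/Cauchy--Schwarz and Jensen steps. Note only that the correction term you derive, $2\max\left(0,\left\lceil\frac{a_i^2}{2(k-1)^{h-1}}-\frac{a_i}{2}\right\rceil\right)$, agrees with the final display of the paper's proof rather than with the theorem statement as printed, whose term $\max\left(0,\left\lceil\frac{a_i}{2(k-1)^{h-1}}-\frac{a_i}{2}\right\rceil\right)$ has $a_i$ in place of $a_i^2$ and lacks the factor $2$ --- an evident typo in the statement, since that expression is never positive.
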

\begin{proof}
Choose an arbitrary edge $uv$ that is contained in exactly $\lambda$ distinct $g-$cycles and define the sets $D_i(u)$ of vertices as follows: $w\in D_i(u)$ if the length of the shortest $uw-$path is $i$ and the length of the shortest $vw-$path is $i+1$. Similarly,  $w\in D_i(v)$ if and only if the length of the shortest $vw-$path is $i$ and the length of the shortest $uw-$path is $i+1$.\\
\begin{center}
    \includegraphics[scale=0.6]{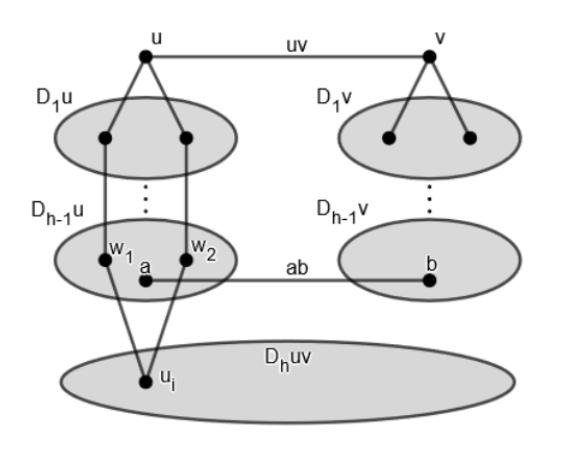}
\end{center}
First, count the number of edges between $D_{h-1}(u)$ and $D_{h-1}(v)$. Suppose that there is an edge $ab$ between $D_{h-1}(u)$ and $D_{h-1}(v)$. Then the edges $ab$, $uv$, and the unique $(h-1)-$paths $av$ and $bv$ form a $g-$cycle. Since $G$ has girth $g$, this is a bijection between the edges between $D_{h-1}(u)$ and $D_{h-1}(v)$ and the $g-$cycles through $uv$. Hence the number of edges between $D_{h-1}(u)$ and $D_{h-1}(v)$ is exactly $\lambda$.

Now, count the number of girth cycles through $u$ that does not contain the edge $uv$.  The graph is $k-$regular, hence $u$ lies on exactly $k$ edges and every edge is contained in exactly $a_i$ distinct $g-$cycles, but in this way, every $g-$cycle through $v$ is counted twice. Therefore the number of $g-$cycles through $u$ is $\frac{1}{2}\sum a_i$. For simplicity, define the variable $s$ as the sum of all $a_i$. So the number of girth cycles through $u$ that does not contain the edge $uv$ is $\frac{s}{2}-\lambda$. There are two types of these cycles: the ones that reach $D_{h-1}(v)$ and the ones that reach the set $D_h(uv):=D_h(u)\cup D_h(v).$\\
We give a lower bound for the number of the first type of these cycles to give an upper bound for the number of the second type of these cycles. There are $(k-1)^{h-1}$ vertices in $D_{h-1}(v)$=$\{v_1,\ldots,v_{(k-1)^{h-1}}\}$. Each vertex $v_i$ has $y_i$ neighbours $D_{h-1}(u)$. Clearly, $\sum y_i=\lambda$. In each vertex $v_i$, we have $\binom{y_i}{2}$ possible choices to form a girth cycle through $u$ that does not contain the edge $uv$. So the number of girth cycles of the first type is
$$\sum_{i=1}^{(k-1)^{h-1}}\binom{y_i}{2}=\frac{1}{2}\sum_{i=1}^{(k-1)^{h-1}}y_i^2-\frac{1}{2}\sum_{i=1}^{(k-1)^{h-1}}y_i=\frac{1}{2}\sum_{i=1}^{(k-1)^{h-1}}y_i^2-\frac{\lambda}{2}.$$

The inequality between the arithmetic and quadratic means gives 
$$\frac{\sum_{i=1}^{(k-1)^{h-1}}y_i^2}{(k-1)^{l-1}}\ge\frac{\left(\sum_{i=1}^{(k-1)^{h-1}}y_i\right)^2}{(k-1)^{2(l-1)}}=\frac{\lambda^2}{(k-1)^{2(h-1)}},$$
hence we obtain the following lower bound:
$$\frac{1}{2}\sum_{i=1}^{(k-1)^{h-1}}y_i^2-\frac{\lambda}{2}\ge\frac{\lambda^2}{2(k-1)^{(h-1)}}-\frac{\lambda}{2}.$$
This lower bound is negative if $\lambda$ is small enough. But we know that the number of girth cycles of the first type is at least zero and an integer, therefore we have the following lower bound:
$$\sum_{i=1}^{(k-1)^{h-1}}\binom{y_i}{2}\ge \max\left(0,\left\lceil\frac{\lambda^2}{2(k-1)^{(h-1)}}-\frac{\lambda}{2}\right\rceil\right).$$
Suppose that there are $m$ vertices that have at least one neighbor in $D_{h-1}(u)$ and are at distance $h$ from the vertex $u$. Their degree set is $\{x_1,\ldots,x_m\}$. We obtain a girth circle if we choose such a vertex $u_i$, its two neighbors, $w_1$ and $w_2$ in $D_{h-1}(u)$ and their unique $(h-1)-$ paths to the vertex $u$. Therefore the number of girth cycles of the second type is exactly $\sum_{i=1}^m\binom{x_i}{2}.$ Now, we have the following upper bound of the number of these circles:
$$\sum_{i=1}^m\binom{x_i}{2}=\frac{s}{2}-\lambda-\sum_{i=1}^{(k-1)^{h-1}}\binom{y_i}{2}\le\frac{s}{2}-\lambda- \max\left(0,\left\lceil\frac{\lambda^2}{2(k-1)^{(h-1)}}-\frac{\lambda}{2}\right\rceil\right).$$
We use this inequality to give a lower bound for $m$ but first, we need to rearrange the terms. We also use the fact that $\sum_{i=1}^mx_i=(k-1)^h-\lambda.$ Now we have that 
$$2\sum_{i=1}^m\binom{x_i}{2}=\sum_{i=1}^mx_i^2-\sum_{i=1}^mx_i\le s-2\lambda- 2\max\left(0,\left\lceil\frac{\lambda^2}{2(k-1)^{(h-1)}}-\frac{\lambda}{2}\right\rceil\right),$$
$$\sum_{i=1}^mx_i^2\le (k-1)^h+s-3\lambda- 2\max\left(0,\left\lceil\frac{\lambda^2}{2(k-1)^{(h-1)}}-\frac{\lambda}{2}\right\rceil\right).$$
By using the inequality between the arithmetic and quadratic means, we get a lower bound for $m$:
$$m\ge\frac{\left(\sum_{i=1}^m x_i\right)^2}{\sum_{i=1}^m x_i^2}\ge \frac{\left((k-1)^h-\lambda\right)^2}{s-3\lambda+(k-1)^h-2\max\left(0,\left\lceil\frac{\lambda^2}{2(k-1)^{(h-1)}}-\frac{\lambda}{2}\right\rceil\right)}.$$
Since $G$ is $k-$regular graph of girth $g$, it has at least $n_0(k,g)$ vertices but with the lower bound of $m$, we also give a lower bound for the additional vertices. We add it to the Moore bound, take the maximum in the signature, and obtain our lower bound for the order of signature-girth-regular graphs of even girth:
$$
\begin{aligned}
        n\ge &2\frac{(k-1)^h-1}{k-2}\\
        &+\max_{1\le i\le k}\left\lceil\frac{\left((k-1)^h-a_i\right)^2}{\sum_{j=1}^ka_j-3a_i+(k-1)^h-2\max\left(0,\left\lceil\frac{a_i^2}{2(k-1)^{(h-1)}}-\frac{a_i}{2}\right\rceil\right)}\right\rceil.
    \end{aligned}
$$
\end{proof}
\begin{corollary}\label{new_bound_egr}
If $G$ is an $egr(n,k,g,\lambda)$ graph with even girth $g=2h$, then
$$n\ge 2\frac{(k-1)^{h}-1}{k-2}+\left\lceil\frac{((k-1)^{h}-\lambda)^2}{(k-3)\lambda+(k-1)^{h}-2\max\left(0,\left\lceil\frac{\lambda^2}{2(k-1)^{h-1}}-\frac{\lambda}{2}\right\rceil\right)}\right\rceil.$$ 
\end{corollary}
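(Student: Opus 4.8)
The plan is to obtain Corollary \ref{new_bound_egr} as the direct specialization of Theorem \ref{cycles} to the edge-girth-regular case. The first step is to observe that an $egr(n,k,g,\lambda)$ graph is in particular an $sgr(n,k,g,\mathbf{a})$ graph whose signature is constant: since every edge lies on exactly $\lambda$ girth cycles, every entry of the signature vector equals $\lambda$, that is, $\mathbf{a}=(\lambda,\lambda,\ldots,\lambda)$ and $a_i=\lambda$ for all $1\le i\le k$. Thus the hypotheses of Theorem \ref{cycles} are satisfied with $g=2h$ even, and we may apply its conclusion verbatim.

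The second step is to carry out the substitution $a_i=\lambda$ in the bound of Theorem \ref{cycles}. Because all the $a_i$ coincide, the expression inside the outer maximum $\max_{1\le i\le k}$ is independent of $i$, so the maximum collapses and may be dropped. Using $\sum_{j=1}^k a_j=k\lambda$, the leading summand of the denominator becomes $\sum_{j=1}^k a_j-3a_i=k\lambda-3\lambda=(k-3)\lambda$, while the numerator becomes $\bigl((k-1)^h-\lambda\bigr)^2$ and the correction term $2\max\bigl(0,\lceil \tfrac{a_i^2}{2(k-1)^{h-1}}-\tfrac{a_i}{2}\rceil\bigr)$ becomes $2\max\bigl(0,\lceil \tfrac{\lambda^2}{2(k-1)^{h-1}}-\tfrac{\lambda}{2}\rceil\bigr)$. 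Substituting these into the bound of Theorem \ref{cycles}, and leaving the Moore term $2\frac{(k-1)^h-1}{k-2}$ unchanged, yields exactly the claimed inequality.

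There is no genuine obstacle here, as the statement is a mechanical specialization; the only points that deserve a line of justification are that an egr-graph does qualify as a girth-regular graph with constant signature (which is immediate from the definitions recalled at the start of Section 3), and that the outer maximum is redundant once the signature is constant. No new combinatorial argument beyond the counting already performed in the proof of Theorem \ref{cycles} is required.
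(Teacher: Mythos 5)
Your proof is correct and is exactly the paper's intended argument: Corollary \ref{new_bound_egr} is stated without a separate proof precisely because it is the mechanical specialization $a_i=\lambda$ (hence $\sum_{j}a_j=k\lambda$, $\sum_j a_j-3a_i=(k-3)\lambda$, and a collapsed maximum) of Theorem \ref{cycles}. You also, rightly, substitute into the bound as it appears in the final display of the theorem's proof, with correction term $2\max\bigl(0,\bigl\lceil\frac{\lambda^2}{2(k-1)^{h-1}}-\frac{\lambda}{2}\bigr\rceil\bigr)$, rather than into the slightly garbled form (missing the square on $a_i$ and the factor $2$) typeset in the theorem's statement.
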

\begin{remark}
{\rm The term 
$\max\left(0,\left\lceil\frac{\lambda^2}{2(k-1)^{h-1}}-\frac{\lambda}{2}\right\rceil\right)$
equals $0$ for every $\lambda\le (k-1)^{h-1}.$}
\end{remark}

Finally, we compare the existing lower bounds (DFJR21 \cite{bound}, P23 \cite{po}) on the order of extremal edge-girth-regular graphs of even girth with the one in Corollary \ref{new_bound_egr}.\\
\begin{center}
\includegraphics[scale=0.8]{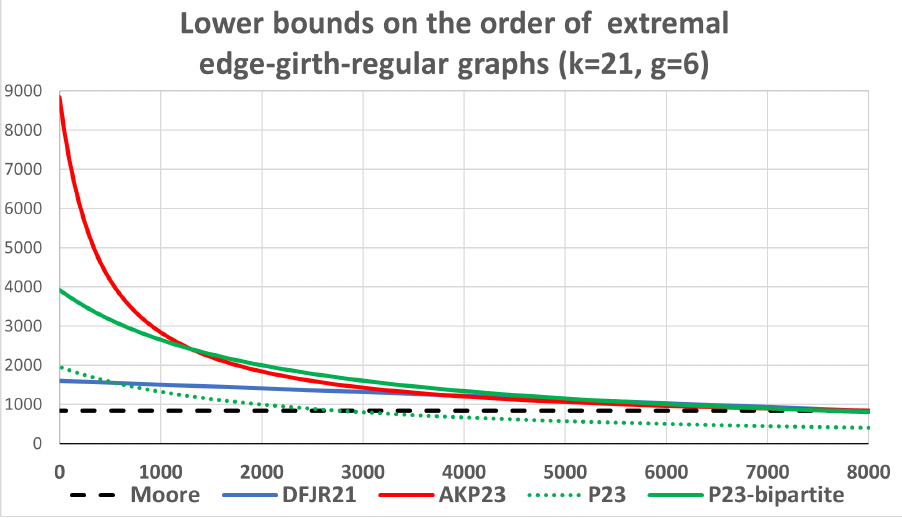}
\end{center}
When $\lambda$ is close to its upper bound $(k-1)^ \frac{g}{2}$, then all three lower bounds are close to the Moore-bound, DFJR21 is always the best one. But it falls back when $\lambda$ is small, as we have seen in Remark \ref{compare_4}. Our result and P23 have the same order of magnitude, namely $\Theta(k^{\frac g2})$, when $\lambda$ is in the order of magnitude $O(k^{\frac g2-1}),$ however our result is better with a constant multiplier. To compare it to DFJR21, it has order of magnitude $\Theta(k^{\frac g2-1})$ for small $\lambda$ parameters.

\noindent
{\bf Gabriela Araujo-Pardo:} 
Instituto de Matem\'aticas-Campus Juriquilla, Universidad Nacional Aut\'onoma de 
M\'exico, C.P. 076230, Boulevard Juriquilla \# 3001, Juriquilla, Qro., M\'exico;  \\
 e-mail:  \texttt{garaujo@im.unam.mx} \\

\noindent
{\bf Gy\"orgy Kiss:} Department of Geometry and HUN-REN-ELTE Geometric and Algebraic Combinatorics
Research Group, E\"otv\"os Lor\'and University, 1117 Budapest, P\'azm\'any
s. 1/c, Hungary; and Faculty of Mathematics, Natural Sciences and Information Technologies, University of Primorska, Glagolja\v ska 8, 6000 Koper,
Slovenia; \\
e-mail: \texttt{gyorgy.kiss@ttk.elte.hu} \\

\noindent
{\bf Istv\'an Porups\'anszki:} Institute of Mathematics and HUN-REN-ELTE Geometric and Algebraic Combinatorics
Research Group, E\"otv\"os Lor\'and University, 1117 Budapest, P\'azm\'any
s. 1/c, Hungary; \\ 
e-mail: \texttt{rupsansz@gmail.com}

\end{document}